\newtheorem*{rep@theorem}{\rep@title}
\newcommand{\newreptheorem}[2]{
    \newenvironment{rep#1}[1]{
        \def\rep@title{#2 \ref{##1}}
        \begin{rep@theorem}
    }
    {\end{rep@theorem}}
}
\title{Isomorphism Spectra and Computably Composite Structures}
\author{Joey~Lakerdas-Gayle\footnote{I am grateful to my supervisor Barbara Csima for her helpful comments and guidance. This research was partially supported by the OGS and QEII-GSST grants.}\\
	\small Department of Pure Mathematics, University of Waterloo\\
	\small Waterloo, Ontario, Canada\\
	\small\tt ja2999la@uwaterloo.ca}
\date{\today}
\begin{document}
\maketitle

\begin{abstract}
Adapting a result of Bazhenov, Kalimullin, and Yamaleev, we show that if a Turing degree $\bfd$ is the degree of categoricity of a computable structure $\M$ and is not the strong degree of categoricity of any computable structure, then $\M$ has a pair of computable copies whose isomorphism spectrum is not finitely generated. Motivated by this result, we introduce a class of computable structures called \emph{computably composite structures} with the property that the isomorphisms between arbitrary computable copies of these structures are exactly the unions of isomorphisms between the computable copies of their components. We use this to show that any computable union of isomorphism spectra is also an isomorphism spectrum. In particular, this gives examples of isomorphism spectra that are not finitely generated.
\end{abstract}

\section{Introduction}
Mathematical structures are usually considered up to isomorphism. However, isomorphic structures may have different computability-theoretic properties. In particular, a pair of isomorphic computable structures need not have any computable isomorphisms. A major topic in computable structure theory is the complexity of isomorphisms between isomorphic computable structures. See \cite{Soare} for background in computability theory, and \cite{AshKnight} and \cite{Mon21} for background in computable structure theory.

If $\A$ and $\B$ are computable structures, we write $f:\A\cong\B$ to mean that $f$ is an isomorphism from $\A$ to $\B$. The \emph{isomorphism spectrum} of $\A$ and $\B$ is the upward-closed set of Turing degrees that compute some isomorphism $f:\A\cong\B$,
\[\IsoSpec(\A,\B)=\{\bfd:\exists f:\A\cong\B[f\leq_T\bfd]\}.\]
$\A$ is \emph{$\bfd$-computably categorical} if $\bfd\in\IsoSpec(\A,\B)$ for every computable copy $\B$ of $\A$. We say that $\A$ is \emph{computably categorical} if it is $\bfzero$-computably categorical. The \emph{categoricity spectrum of $\A$} is the upward-closed set of Turing degrees
\[\CatSpec(\A)=\{\bfd:\A\text{ is $\bfd$-computably categorical}\}=\bigcap_{\B\cong\A}\IsoSpec(\A,\B).\]
We write $\D$ for the set of all Turing degrees, and $\D_{\geq}(\bfd)$ for the set of Turing degrees that compute $\bfd$. If $\CatSpec(\A)=\D_{\geq}(\bfd)$, then $\bfd$ is the \emph{degree of categoricity of $\A$}.

The following is an example of a family of computable structures that are not computably categorical. We will use these particular structures in Section~\ref{sec:thomason}.
\begin{exmp}\label{exmp:lt}
    Given a \ce set $X\subseteq\omega$ with a fixed computable enumeration $\{x_i:i<\omega\}$, define the ordering $(\omega,<_X)$ where for all $n<m<\omega$, we have
    \[2n<_X 2m\text{ and }2x_n <_X 2n+1 <_X 2x_n+2.\]
    It is not hard to check that $(\omega,<_X)$ is a computable copy of $(\omega,<)$.
    Notice that $k\in X\Leftrightarrow(\exists n)[2k<_X n<_X 2k+2]$, so the unique isomorphism $f:(\omega,<)\cong(\omega,<_X)$ can be used to compute $X$. We can also computably build $f$ using an oracle for $X$, so $f\equiv_T X$. That is, $\IsoSpec((\omega,<),(\omega,<_X))=\D_{\geq}(\deg_T(X))$. In particular, $\IsoSpec((\omega,<),(\omega,<_{\emptyset'}))=\D_{\geq}(\bfzero')$. Moreover, if $(\omega,\prec)$ is any computable copy of $(\omega,<)$, the unique isomorphism from $(\omega,<)$ to $(\omega,\prec)$ is $\bfzero'$-computable, so $\CatSpec(\omega,<)=\D_{\geq}(\bfzero')$.
\end{exmp}

Let $\M$ be any computable structure and let $\{(\A_i,\B_i)\}_{i<\omega}$ be a list of all pairs of computable copies of $\M$. Then $\CatSpec(\M)=\bigcap_{i<\omega}\IsoSpec(\A_i,\B_i)$. For some computable structures $\M$, the categoricity spectrum is achieved by a finite intersection of isomorphism spectra. Bazhenov, Kalimullin, and Yamaleev~\cite{BKY18} defined the \emph{spectral dimension} of $\M$ as the least $k\leq\omega$ such that there is a set $X\subseteq\omega$ with $|X|=k$ and $\CatSpec(\M)=\bigcap_{i\in X}\IsoSpec(\A_i,\B_i)$. We write $\SpecDim(\M)=k$ in this case.

If a structure has degree of categoricity $\bfd$ and spectral dimension 1, we say that the structure has \emph{strong degree of categoricity} $\bfd$. The study of strong degrees of categoricity motivated the introduction of spectral dimension. In the example above, we see that $(\omega,<)$ has strong degree of categoricity $\bfzero'$.\\

Fokina, Kalimullin, and Miller~\cite{FKM10} first introduced categoricity spectra and degrees of categoricity in 2010. A structure of Miller~\cite{Mil09} (a particular algebraic field) was the first shown not to have a degree of categoricity. It has spectral dimension 1. Fokina, Frolov, and Kalimullin~\cite{FFK16} showed that for every non-zero \ce degree $\bfd$, there is a $\bfd$-computably categorical rigid structure with no degree of categoricity. These structures have infinite spectral dimension. Bazhenov, Kalimullin, and Yamaleev~\cite{BKY18}, and Csima and Stephenson~\cite{CS19} independently constructed rigid computable structures with degree of categoricity and finite spectral dimension greater than 1. These were the first examples of structures that have degrees of categoricity, but not strongly. Turetsky~\cite{Tur20} constructed the first example of a computable structure with degree of categoricity and infinite spectral dimension.

It is still unknown whether there exists a computable structure $\M$ with a degree of categoricity that is not the strong degree of categoricity of any structure. In Section~\ref{sec:never strong} we expand on a result of Bazhenov, Kalimullin, and Yamaleev~\cite{BKY20} to obtain some properties that such a structure would have:
\begin{reptheorem}{cor:never strong}[Follows from Bazhenov, Kalimullin, Yamaleev \cite{BKY16},\cite{BKY20}]
    If $\bfd$ is the degree of categoricity of a computable structure $\M$ and is not the strong degree of categoricity of any computable structure, then $\M$ has infinite spectral dimension, an infinite automorphism group, and a pair of computable copies $\A$ and $\B$ such that $\IsoSpec(\A,\B)$ is not finitely generated (that is, not equal to a finite union of cones).
\end{reptheorem}
Motivated by this, we are interested in studying isomorphism spectra that are not finitely generated. Section~\ref{sec:ccs} introduces \emph{computably composite structures}, a notion of effectively ``attaching" a collection of computable structures to the points of another computable structure. We discuss the isomorphism spectra of computably composite structures in terms of the isomorphims of their component structures. In Section~\ref{sec:H}, we use a specific class of computably composite structures to show that the class of isomorphism spectra is closed under computable unions:
\begin{reptheorem}{thm:comp unions}
    Given any two uniformly computable collections of copies $\bfA=\{\A_i:i<\omega\}$ and $\bfB=\{\B_i:i<\omega\}$ such that for each $i$, $\A_i\cong\B_i$, there exists a structure with two computable copies $\M\cong\calN$ where $\IsoSpec(\M,\calN)=\bigcup_{i<\omega}\IsoSpec(\A_i,\B_i)$.
\end{reptheorem}
In Section~\ref{sec:thomason} we apply Theorem~\ref{thm:comp unions} to a result of Thomason~\cite{Tho71} to construct a particular structure whose isomorphism spectrum is not finitely generated. Section~\ref{sec:autospec} describes some connections to work of Harizanov, Morozov, and Miller \cite{HMM10} on automorphism spectra of structures, and Section~\ref{sec:catspec} briefly considers the categoricity spectra of some computably composite structures and computes the categoricity spectrum of the particular structures from Section~\ref{sec:thomason}.
\begin{rem}
    Every structure is uniformly effectively bi-interpretable with a graph (see Chapter 6.3 of Montalb\'an's monograph \cite{Mon21}). Using techniques similar to the proof of Lemma 6.3.8 of the same reference, it can be shown that effective bi-interpretability preserves isomorphism spectra and categoricity spectra so that it is sufficient to prove Corollary~\ref{cor:never strong} and Theorem~\ref{thm:comp unions} in just the case of relational structures. Thus, in the following sections, we will assume that all structures are relational.
\end{rem}

\section{Computably composite structures}\label{sec:ccs}
We begin by introducing the notion of \emph{computably composite structures}. Informally, given a computable structure $\calS$, we wish to ``attach" a computable structure $\A_x$ to each point $x\in S$. This new structure $\calS[\A_x:x\in S]$ will be computable. Its computable copies and the isomorphisms between those copies will have desirable properties.

Throughout this paper, we use calligraphic capital letters for copies of structures, and boldface capital letters for collections of copies of structures.

\begin{defn}\label{defn: unif comp}
    Let $I$ be any computable set and let $\bfA:=\{\A_x:x\in I\}$ be a collection of computable copies of structures. We say that $\bfA$ is \emph{uniformly computable} if each $\A_x$ is a particular computable copy of a computable $\Lang_x$-structure, the languages $\{\Lang_x:x\in I\}$ are uniformly computable, and there is a computable function $f:I\to\omega$ such that $\varphi_{f(x)}=D(\A_x)$ for each $x\in I$. That is, the Turing machine with index $f(x)$ computes the atomic diagram of the structure $\mathcal{A}_x$.
\end{defn}
\begin{rem}
    In Definition~\ref{defn: unif comp}, if $x\neq y$, then $\A_x$ and $\A_y$ may not be copies of the same computable structure, and may not even have the same language or universe. In particular, the universes of these copies may not be subsets of $\omega$. However, since they are uniformly computable, there are standard uniformly computable encodings of the universes into computable subsets of $\omega$.
\end{rem}
\begin{defn}[Computably composite structures]\label{cc defn}
    Let $\calS$ be a computable $\Lang_\calS$-structure with universe $S$, and let $\bfA:=\{\A_x:x\in S\}$ be a uniformly computable collection of copies such that the universes $\{S\}\cup\{A_x:x\in S\}$ are pairwise disjoint. We define the \emph{computable composition of $\calS$ with $\bfA$} to be the structure $\calS[\bfA]$ with universe $S\cup\bigcup_{x\in S}A_x$, relational language $\Lang:=\{\mu\}\cup\Lang_\calS\cup\bigcup_{x\in S}\Lang_x$, and satisfying:
    \begin{enumerate}
        \item $\mu^{\calS[\bfA]}=\left\{(a,x):x\in S,a\in A_x\right\}\cup\{(x,x):x\in S\}$.
        \item If $R$ is an $n$-ary relation in $\Lang_\calS$ and $x_0,\dots,x_{n-1}\in S$, then $R^{\calS[\bfA]}(x_0,\dots,x_{n-1})$ iff $R^\calS(x_0,\dots,x_{n-1})$.
        \item If $Q$ is an $n$-ary relation in $\Lang_x$ and $a_0,\dots,a_{n-1}\in A_x$, then $Q^{\calS[\bfA]}(a_0,\dots,a_{n-1})$ iff $Q^{\A_x}(a_0,\dots,a_{n-1})$.
        \item No other relations hold.
    \end{enumerate}
    A structure of the form $\calS[\bfA]$ is called a \emph{computably composite structure}. The structure $\calS$ is the \emph{base structure} of $\calS[\bfA]$, and the structures $\{\A_x:x\in S\}$ are the \emph{component structures}.
\end{defn}
\begin{rem}
    In a computably composite structure, $\mu$ is a directed edge. Every element has exactly one outward $\mu$-edge, so we may use the notation $\mu(z)$ to denote the unique element for which $(z,\mu(z))\in\mu$. This is computable since the element is unique and the universe of the structure is computable. Also notice that $\mu(z)\in S$ for all elements $z$ of $\calS[\bfA]$. We may think of $\mu$ as connecting members of $\A_x$ to the point $x\in\calS$.
\end{rem}
\begin{rem}
    In $\calS$ and in each component structure, the relations are uniformly computable. By the uniform computability of $\bfA$, the relations of $\calS[\bfA]$ are also uniformly computable. Since $\{S\}\cup\{A_x:x\in S\}$ are pairwise disjoint and uniformly computable, the relation $\mu^{\calS[\bfA]}$ is computable. Thus, $\calS[\bfA]$ is a computable structure.
\end{rem}
\begin{rem}
    $\calS$ is a reduct of the substructure of $\calS[\bfA]$ with universe $S$ and for each $x\in S$, $\A_x$ is a reduct of the substructure of $\calS[\bfA]$ with universe $A_x$. As these reducts remove only trivial relations, we will identify them with the actual substructures.
\end{rem}
\begin{exmp}
    We consider a simple illustrative example where $\calS=(\{0,1,2\},E)$ is the finite directed graph where $E=\{(0,1),(1,0),(0,2),(1,2)\}$.
    \begin{center}
    \begin{tikzpicture}[align=center]
        \node (0) at (-0.7,0) {$0$};
        \node (1) at (0.7,0) {$1$};
        \node (2) at (0,1.2) {$2$};
        \path [->] (0.20) edge (1.160);
        \path [->] (1.200) edge (0.340);
        \path [->] (0) edge (2);
        \path [->] (1) edge (2);
    \end{tikzpicture}
    \end{center}
    Now take $\bfA:=\{\A_0,\A_1,\A_2\}$ where
    \[\A_0:=(\{0\}\times\omega,<),\ \A_1:=(\{1\}\times\omega,<),\ \A_2:=(\{2\}\times\Z,<)\]
    where $<$ is interpreted in all structures as the usual ordering in the second coordinate. We have relabeled $\omega$, $\omega$, and $\Z$ to $\{0\}\times\omega$, $\{1\}\times\omega$ and $\{2\}\times\Z$ respectively so that the universes are disjoint. Then the computably composite structure $\calS[\bfA]$ has universe
    \[\{0,1,2\}\cup\Big(\{0\}\times\omega\Big)\cup\Big(\{1\}\times\omega\Big)\cup\Big(\{2\}\times\Z\Big),\]
    and language $(\mu,E,<)$.
    \begin{figure}[!ht]
    \centering
    \begin{tikzpicture}[align=center]
        \node (0) at (-0.7,0) {$0$};
        \node (1) at (0.7,0) {$1$};
        \node (2) at (0,1.2) {$2$};
        \node (00) at (-7,-1.5) {$(0,0)$};
        \node (01) at (-5,-1.5) {$(0,1)$};
        \node (02) at (-3,-1.5) {$(0,2)$};
        \node (0dots) at (-1,-1.5) {$\dots$};
        \node (0lt1) at (-6,-1.5) {$<$};
        \node (0lt2) at (-4,-1.5) {$<$};
        \node (0lt3) at (-2,-1.5) {$<$};
        \node (10) at (1,-1.5) {$(1,0)$};
        \node (11) at (3,-1.5) {$(1,1)$};
        \node (12) at (5,-1.5) {$(1,2)$};
        \node (1dots) at (7,-1.5) {$\dots$};
        \node (1lt1) at (2,-1.5) {$<$};
        \node (1lt2) at (4,-1.5) {$<$};
        \node (1lt3) at (6,-1.5) {$<$};
        \node (21) at (2,2.7) {$(2,1)$};
        \node (22) at (4,2.7) {$(2,2)$};
        \node (2dotsup) at (6,2.7) {$\dots$};
        \node (20) at (0,2.7) {$(2,0)$};
        \node (2-1) at (-2,2.7) {$(2,-1)$};
        \node (2-2) at (-4,2.7) {$(2,-2)$};
        \node (2dotsdown) at (-6,2.7) {$\dots$};
        \node (2lt-2) at (-5,2.7) {$<$};
        \node (2lt-1) at (-3,2.7) {$<$};
        \node (2lt0) at (-1,2.7) {$<$};
        \node (2lt1) at (1,2.7) {$<$};
        \node (2lt2) at (3,2.7) {$<$};
        \node (2lt3) at (5,2.7) {$<$};
        \node[draw,dotted,fit=(00) (0dots)] {};
        \node[draw,dotted,fit=(10) (1dots)] {};
        \node[draw,dotted,fit=(2dotsup) (2dotsdown)] {};
        \path [->] (0.20) edge (1.160);
        \path [->] (1.200) edge (0.340);
        \path [->] (0) edge (2);
        \path [->] (1) edge (2);
        \path [->, red] (2dotsdown.270) edge (2);
        \path [->, red] (2-2.270) edge (2);
        \path [->, red] (2-1.270) edge (2);
        \path [->, red] (20.270) edge (2);
        \path [->, red] (21.270) edge (2);
        \path [->, red] (22.270) edge (2);
        \path [->, red] (2dotsup.270) edge (2);
        \path [->, red] (10.90) edge (1);
        \path [->, red] (11.90) edge (1);
        \path [->, red] (12.90) edge (1);
        \path [->, red] (1dots.90) edge (1);
        \path [->, red] (00.90) edge (0);
        \path [->, red] (01.90) edge (0);
        \path [->, red] (02.90) edge (0);
        \path [->, red] (0dots.90) edge (0);
    \end{tikzpicture}
    \caption{The black arrows represent the directed edge relation $E$ and the red arrows represent the directed edge relation $\mu$. The dotted boxes indicate the subsets that generate substructures $\A_0$, $\A_1$, and $\A_2$. The $\mu$-self-loops on $0,1,2$ are omitted.}
    \label{img:exmp}
    \end{figure}
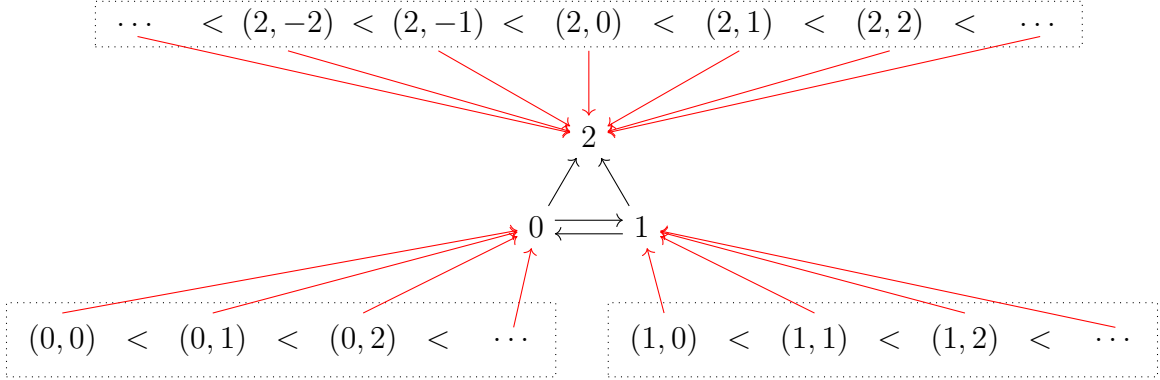
    
    We briefly consider the automorphisms of this structure. Every automorphism of $\calS[\bfA]$ fixes $2$, and may or may not swap $0$ and $1$. If an automorphism swaps $0$ and $1$, it also must swap each $(0,n)$ and $(1,n)$. An automorphism may shift each $(2,n)$ to $(2,n+k)$ for some $k\in\Z$. The automorphism is determined after choosing whether or not to swap $0$ with $1$ and choosing a $k\in\Z$, so automorphisms of $\calS[\bfA]$ are exactly the maps of the form
    \[\rho=\theta\cup\psi_0\cup\psi_1\cup\psi_2\]
    where $\theta\in\Aut(\calS)$, and $\psi_i:\A_i\cong\A_{\theta(i)}$ for $i\in\{0,1,2\}$. We will show in the next section that this characterization extends to isomorphisms between arbitrary pairs of computably composite structures.
\end{exmp}
\subsection{Isomorphisms between computably composite structures}
First, we will show that every computable copy of a computably composite structure is also computably composite.
\begin{prop}\label{copies are composite}
    Suppose $\M$ is a computable copy of $\calS[\bfA]$ via isomorphism $\rho:\calS[\bfA]\cong\M$. Let $\G$ be the substructure of $\M$ generated by $\rho(S)$. For each $g\in G$ let $\B_g$ be the substructure of $\M$ generated by $\rho(A_{\rho\inv(g)})$ and let $\bfB:=\{\B_g:g\in G\}$. Then $\G[\bfB]=\M$ is a computably composite structure.
\end{prop}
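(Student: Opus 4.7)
My plan is to explicitly identify the universes of $\G$ and each $\B_g$, verify uniform computability using the self-loop structure of $\mu^\M$, and then check that $\G[\bfB]$ agrees with $\M$ clause by clause against Definition~\ref{cc defn}.

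First, because all languages are relational (as assumed in the preceding remark), the substructure generated by a set $X$ has universe exactly $X$. Hence $G=\rho(S)$ and $B_g=\rho(A_{\rho\inv(g)})$ for each $g\in G$. Since $\rho$ is a bijection and the universe of $\calS[\bfA]$ partitions as $S\sqcup\bigsqcup_{x\in S}A_x$, the universe $M$ partitions as $G\sqcup\bigsqcup_{g\in G}B_g$, yielding the required pairwise disjointness.

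Second, I would use the self-loop feature of $\mu$ to prove computability. Clause (1) of Definition~\ref{cc defn} says $(z,z)\in\mu^{\calS[\bfA]}$ iff $z\in S$, so by isomorphism $g\in G$ iff $(g,g)\in\mu^\M$, making $G$ computable. Uniformly in $g$, the set $B_g=\{m\in M:(m,g)\in\mu^\M,\ m\neq g\}$ is computable. Each restriction $\rho|_{A_x}$ is then an isomorphism of $\Lang_x$-structures from $\A_x$ onto $\B_{\rho(x)}$, and $\rho|_S$ is an isomorphism from $\calS$ onto $\G$. Uniform computability of $\bfB$ then follows from uniform computability of $\bfA$ together with computability of the universes $B_g$ and of $\M$: the languages $\Lang_{\rho\inv(g)}$ and diagrams $D(\B_g)$ can be read off from $\M$ restricted to $B_g$, using the uniform listing of the $\Lang_x$ to identify which symbols are relevant to each component.

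Finally, I verify $\G[\bfB]=\M$ by matching relations clause by clause against Definition~\ref{cc defn}. The universe of $\G[\bfB]$ is $G\cup\bigcup_{g\in G}B_g=M$. For clause (1), $\mu^{\G[\bfB]}=\{(b,g):g\in G,\ b\in B_g\}\cup\{(g,g):g\in G\}$, which equals $\rho(\mu^{\calS[\bfA]})=\mu^\M$. Clauses (2) and (3) hold because $\G$ and each $\B_g$ are literally substructures of $\M$, so their $\Lang_\calS$- and $\Lang_{\rho\inv(g)}$-relations inherit directly from those of $\M$. Clause (4)—that no cross relations hold—transfers from $\calS[\bfA]$ via the isomorphism $\rho$. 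The main obstacle, in my view, is bookkeeping rather than conceptual: matching the uniform computability of $\bfB$ exactly to the definition, in particular showing that the languages $\Lang_{\rho\inv(g)}$ transfer uniformly in $g$ even though $\rho\inv$ itself need not be computable. This is resolved precisely because $\mu^\M$ gives computable access to each component $B_g$, allowing the language and diagram of $\B_g$ to be extracted uniformly from the computable presentation of $\M$.
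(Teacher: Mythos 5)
Your proposal is correct and follows essentially the same route as the paper's own proof: disjointness of $\{G\}\cup\{B_g:g\in G\}$ from the bijectivity of $\rho$, computability of $G$ via the $\mu$-self-loops, uniform computability of each $B_g$ via the $\mu$-edges into $g$, and uniform computability of $\bfB$ because relations on $B_g$ are exactly the restrictions of relations of $\M$. You simply spell out the clause-by-clause check of $\G[\bfB]=\M$ that the paper dismisses as clear, so no substantive difference.
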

\begin{proof}
    Since $\{S\}\cup\{A_x:x\in S\}$ are pairwise disjoint and $\rho$ is a bijection, it follows that $\{G\}\cup\{B_g:g\in G\}$ are also pairwise disjoint. It is clear that the construction of $\G[\bfB]$ yields exactly the structure $\M$. $\G$ is the substructure of $\M$ generated by the set
    $G=\{x\in M:\mu^\M(x,x)\}$ which is computable, so $\G$ is computable. It remains to verify that $\bfB$ is uniformly computable.
    
    For each $g\in G$, $\B_g$ is the substructure of $\M$ generated by the set $B_g=\{b\in M:\mu^\M(b,g)\land b\neq g\}$
    which is uniformly computable in $g$. Then $\bfB=\{\B_g\}_{g\in\G}$ is uniformly computable since a relation holds over elements of $B_g$ if and only if the relation holds over the same elements of $\M$.
\end{proof}
Next, we characterize the isomorphisms between two computably composite structures $\calS[\bfA]$ and $\G[\bfB]$. Informally, these are exactly the maps that induce an isomorphism from $\calS$ to $\G$ and induce isomorphisms between the corresponding component structures.
\begin{prop}\label{composite isos}
    Suppose $\calS[\bfA]$ and $\G[\bfB]$ are isomorphic computably composite structures. Then the isomorphisms from $\calS[\bfA]$ to $\G[\bfB]$ are exactly the maps of the form
    \[\rho=\theta\cup\bigcup_{x\in S}\psi_x\]
    where $\theta:\calS\cong\G$ and $\psi_x:\A_x\cong\B_{\theta(x)}$ for each $x\in S$.
\end{prop}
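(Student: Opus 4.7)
The plan is to prove both inclusions separately, with the forward (``only if'') direction being the substantive content. First, for the reverse direction, suppose $\theta:\calS\cong\G$ and $\psi_x:\A_x\cong\B_{\theta(x)}$ are given. Because the universes $\{S\}\cup\{A_x:x\in S\}$ are pairwise disjoint, the union $\rho:=\theta\cup\bigcup_{x\in S}\psi_x$ is a well-defined bijection onto $G\cup\bigcup_{g\in G}B_g$. I would then verify preservation of each type of relation separately: clauses (2) and (3) of Definition~\ref{cc defn} reduce preservation of $\Lang_\calS$-relations and $\Lang_x$-relations to the facts that $\theta$ and the $\psi_x$ are already isomorphisms, and clause (4) (``no other relations hold'') ensures there is nothing more to check. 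The $\mu$-relation is preserved by inspection: self-loops on $S$ go to self-loops on $G$ via $\theta$, and if $a\in A_x$ then $(a,x)\in\mu^{\calS[\bfA]}$ maps to $(\psi_x(a),\theta(x))\in\mu^{\G[\bfB]}$ since $\psi_x(a)\in B_{\theta(x)}$.

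For the forward direction, let $\rho:\calS[\bfA]\cong\G[\bfB]$ be arbitrary. The key observation is that the base structure is definable from $\mu$: by clause (1),
\[S=\{x\in\calS[\bfA]:(x,x)\in\mu^{\calS[\bfA]}\},\qquad G=\{g\in\G[\bfB]:(g,g)\in\mu^{\G[\bfB]}\}.\]
Since $\rho$ preserves $\mu$, it restricts to a bijection $\theta:=\rho\restriction S:S\to G$. Next, I would identify the components: for $x\in S$ and $a\in A_x$, disjointness of the universes gives $a\neq x$ and $(a,x)\in\mu^{\calS[\bfA]}$, so $(\rho(a),\theta(x))\in\mu^{\G[\bfB]}$ with $\rho(a)\neq\theta(x)$. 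The characterization of $\mu^{\G[\bfB]}$ then forces $\rho(a)\in B_{\theta(x)}$. Running the same argument on $\rho^{-1}$ shows $\psi_x:=\rho\restriction A_x$ is a bijection onto $B_{\theta(x)}$, and consequently $\rho=\theta\cup\bigcup_{x\in S}\psi_x$ as sets of ordered pairs.

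It remains to check that $\theta$ and each $\psi_x$ are isomorphisms of the appropriate (sub)structures. By clause (2), for any $n$-ary $R\in\Lang_\calS$ and any tuple $\bar{x}\in S^n$, $R^{\calS[\bfA]}(\bar{x})$ holds iff $R^{\calS}(\bar{x})$ holds; the analogous statement holds for $\G$ and $\G[\bfB]$. Since $\rho$ preserves $R^{\calS[\bfA]}$ and sends tuples from $S$ to tuples from $G$, it follows that $\theta$ preserves $R^\calS$. The same argument using clause (3) shows that each $\psi_x$ preserves the $\Lang_x$-relations, so $\psi_x:\A_x\cong\B_{\theta(x)}$. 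This completes the decomposition.

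The argument is essentially bookkeeping, and the only subtle point — which I expect to be the main (mild) obstacle — is making disciplined use of the universe-disjointness hypothesis together with clause (4). Without ``no other relations hold,'' one might worry that $\rho$ carries some $\Lang_\calS$-relation witnessed on $S$ to a relation witnessed partly outside $G$; clause (4) rules this out on both sides, so the inductive bijection of components is forced and cannot be scrambled across different $x$'s.
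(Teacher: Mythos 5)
Your proof is correct and follows essentially the same route as the paper: the easy direction by direct verification, and the forward direction by using the $\mu$-definability of the base set (self-loops) and of each component to decompose $\rho$ into $\theta\cup\bigcup_{x\in S}\psi_x$. The only cosmetic difference is that you isolate the components via ``$(a,x)\in\mu$ and $a\neq x$'' together with injectivity of $\rho$, while the paper uses the purely relational characterization $\mu(a,x)\land\neg\mu(a,a)$; both work.
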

\begin{proof}
    For any such $\theta$ and $\{\psi_x\}_{x\in S}$, since $\dom(\theta)\cup\bigcup_{x\in S}\dom(\psi_x)=S\cup\bigcup_{x\in S}A_x$, $\rho$ is actually a function from the universe of $\calS[\bfA]$ to the universe of $\G[\bfB]$. Since $\theta(S)\cup\bigcup_{x\in S}\psi_x(A_x)=G\cup\bigcup_{x\in S}B_{\theta(x)}$ and $\theta$ is a bijection, $\rho$ is a bijection. It is easily checked that all relations are preserved by $\rho$, so every such $\rho$ is an isomorphism.

    Let $\rho:\calS[\bfA]\to\G[\bfB]$ be any isomorphism. We have $x\in S\Leftrightarrow\mu^{\calS[\bfA]}(x,x)$ and $g\in G\Leftrightarrow\mu^{\G[\bfB]}(g,g)$, so since $\rho$ is an isomorphism, we must have that $\theta:=\rho\restrict_S$ is an isomorphism from $\calS$ to $\G$. For any $x\in S$, $a\in A_x\Leftrightarrow\left(\mu^{\calS[\bfA]}(a,x)\land\neg\mu^{\calS[\bfA]}(a,a)\right)$ and
    \[g\in B_{\theta(x)}\Leftrightarrow\left(\mu^{\G[\bfB]}(g,\theta(x))\land\neg\mu^{\G[\bfB]}(g,g)\right)\Leftrightarrow\left(\mu^{\G[\bfB]}(g,\rho(x))\land\neg\mu^{\G[\bfB]}(g,g)\right),\]
    so since $\rho$ is an isomorphism, we must have that $\psi_x:=\rho\restrict_{A_x}$ is an isomorphism from $\A_x$ to $\B_{\theta(x)}$. As this covers the domain of $\calS[\bfA]$, we must have $\rho=\theta\cup\bigcup_{x\in S}\psi_x$.
\end{proof}

\begin{rem}\label{comp stable}
    An important case is when the base structure $\calS$ is \emph{computably stable}: for every computable copy $\G$ of $\calS$, every isomorphism from $\calS$ to $\G$ is computable. When $\calS$ is computably stable and $\calS[\bfA]$ is computably composite, Proposition~\ref{copies are composite} implies that up to computable isomorphism, the computable copies of $\calS[\bfA]$ are exactly the computably composite structures $\calS[\bfB]$ where $\A_x\cong\B_x$ for all $x\in S$. Proposition~\ref{composite isos} implies that the complexity of the isomorphisms between $\calS[\bfA]$ and $\calS[\bfB]$ depend only on the complexity of the isomorphisms between the corresponding component structures. This will be an important property of the structure defined in Section~\ref{sec:H}.
\end{rem}

\subsection{Degrees of categoricity that are never strong}\label{sec:never strong}
It is not known whether there exists a degree of categoricity that is not the strong degree of categoricity of any computable structure. We adapt a result of Bazhenov, Kalimullin, and Yamaleev which serves as motivation for the main result of this paper. For this, we define a particularly simple class of computably composite structures:
\begin{exmp}
    For each $n<\omega$, define the directed graph $\calP_n=(\{0,\dots,n-1\},E_n)$ where $E_n=\{(k,k+1):k<n-1\}$. If $\{\A_i:i<n\}$ is any collection of computable structures, it is uniformly computable since it is finite, so $\calP_n[\{i\}\times\A_i:i<n]$ is a computably composite structure (we relabel the universe $A_i$ as $\{i\}\times A_i$ to ensure disjointness). We may think of $\calP_n[\{i\}\times\A_i:i<n]$ as a directed path \emph{of the structures}:
    \[\A_0\to\A_1\to\dots\to\A_{n-1}.\]
    Since $\calP_n$ is computably stable, it follows from Proposition~\ref{copies are composite} and Proposition~\ref{composite isos} that the computable copies of $\calP_n[\{i\}\times\A_i:i<n]$ are (up to computable isomorphism) of the form $\calP_n[\{i\}\times\B_i:i<n]$ where $\B_i$ is a computable copy of $\A_i$ for each $i<n$. Each isomorphism between these paths is Turing-equivalent to a union $\bigcup_{i<n}\psi_i$ where $\psi_i:\A_i\cong\B_i$ for each $i<n$. Thus, their isomorphism spectrum is exactly $\bigcap_{i<n}\IsoSpec(\A_i,\B_i)$.
\end{exmp}
\begin{thm}[follows from Bazhenov, Kalimullin, Yamaleev
2020 \cite{BKY20}, Proposition 1b]\label{thm:BKY20}
    If $\M$ is a computable structure with spectral dimension $n<\omega$ and degree of categoricity $\bfd$, then $\calP_n[\{i\}\times\M:i<n]$ has strong degree of categoricity $\bfd$.
\end{thm}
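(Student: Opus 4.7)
Write $\calN := \calP_n[\{i\}\times\M : i<n]$. To show $\calN$ has strong degree of categoricity $\bfd$, the plan is to verify two things: (i) $\CatSpec(\calN) = \D(\geq\bfd)$, and (ii) $\calN$ has spectral dimension $1$, witnessed by a single well-chosen pair of computable copies.

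For (i), the key tools are Proposition~\ref{copies are composite} and Proposition~\ref{composite isos}, together with the fact that $\calP_n$ is computably stable (cf.\ Remark~\ref{comp stable}): every computable copy of $\calN$ is computably isomorphic to one of the form $\calP_n[\{i\}\times\calM_i : i<n]$ with each $\calM_i$ a computable copy of $\M$, and each isomorphism between two such copies is Turing-equivalent to a union $\bigcup_{i<n}\psi_i$ of componentwise isomorphisms. The inclusion $\CatSpec(\M) \subseteq \CatSpec(\calN)$ follows by gluing: any oracle that computes isomorphisms of each component-pair also computes their union. For the reverse, given an arbitrary pair $\calM, \calM'$ of computable copies of $\M$, form two copies of $\calN$ that agree at every component index except $0$, where they use $\calM$ and $\calM'$ respectively; any oracle in $\CatSpec(\calN)$ computes an isomorphism of these, which restricts to an isomorphism $\calM \cong \calM'$. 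Hence $\CatSpec(\calN) = \CatSpec(\M) = \D(\geq\bfd)$.

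For (ii), let $\{(\A_i, \B_i) : i < n\}$ be pairs of computable copies of $\M$ witnessing $\SpecDim(\M) = n$, so that $\D(\geq\bfd) = \bigcap_{i<n} \IsoSpec(\A_i, \B_i)$. I would set
\[\calN_1 := \calP_n[\{i\}\times\A_i : i<n], \qquad \calN_2 := \calP_n[\{i\}\times\B_i : i<n],\]
both of which are computable copies of $\calN$. By Proposition~\ref{composite isos}, every isomorphism $\calN_1 \cong \calN_2$ is Turing-equivalent to a union of componentwise isomorphisms $\A_i \cong \B_i$, so
\[\IsoSpec(\calN_1, \calN_2) = \bigcap_{i<n} \IsoSpec(\A_i, \B_i) = \D(\geq\bfd) = \CatSpec(\calN).\]
This shows $\SpecDim(\calN) \leq 1$, and assuming $\bfd \neq \bfzero$ (the case of interest, since otherwise $\M$ itself is already strongly computably categorical), the empty intersection $\D$ differs from $\D(\geq\bfd)$, giving $\SpecDim(\calN) = 1$ exactly.

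The argument has no substantive obstacle beyond correctly applying the two propositions of Section~\ref{sec:ccs}; the only minor point to handle is the ``up to computable isomorphism'' qualifier in Proposition~\ref{copies are composite}, which is harmless because isomorphism spectra are invariant under precomposition by computable isomorphisms, so passing to a computably isomorphic copy affects neither the computation of $\CatSpec(\calN)$ nor the witnessing pair $(\calN_1, \calN_2)$ in step (ii).
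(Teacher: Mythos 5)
Your proposal is correct and follows essentially the same route as the paper: the witnessing pairs for $\SpecDim(\M)=n$ are assembled into the single pair $\calP_n[\{i\}\times\A_i:i<n]$, $\calP_n[\{i\}\times\B_i:i<n]$, whose isomorphism spectrum is $\bigcap_{i<n}\IsoSpec(\A_i,\B_i)=\D(\geq\bfd)$, while the componentwise decomposition of isomorphisms between arbitrary computable copies (via the composite-structure propositions and computable stability of $\calP_n$) gives $\D(\geq\bfd)\subseteq\CatSpec(\calP_n[\{i\}\times\M:i<n])$. Your separate argument that $\CatSpec(\calN)\subseteq\CatSpec(\M)$ using copies agreeing at all indices except $0$ is fine but redundant, since the witnessing pair in step (ii) already supplies that upper bound, exactly as in the paper.
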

\begin{proof}
    Since $\M$ has spectral dimension $n$, there are pairs $(\A_i,\B_i)_{i<n}$ of computable copies of $\M$ such that
    \[\bigcap_{i<n}\IsoSpec(\A_i,\B_i)=\CatSpec(\M)=\D_{\geq}(\bfd).\]
    Now $\calP_n[\A_i:i<n]$ and $\calP_n[\B_i:i<n]$ are computably composite structures as in the previous example, and are both isomorphic to $\calP_n[\{i\}\times\M:i<n]$. Thus,
    \[\IsoSpec(\calP_n[\A_i:i<n],\calP_n[\B_i:i<n])=\bigcap_{i<n}\IsoSpec(\A_i,\B_i)=\D_{\geq}(\bfd).\]
    Since every computable copy of $\calP_n[\{i\}\times\M:i<n]$ is a path $\calN_0\to\dots\to\calN_{n-1}$ with each $\calN_i$ a computable copy of $\M$, and $\M$ is $\bfd$-computably categorical, every isomorphism between these paths is computable in $\bfd$. So $\calP_n[\M]$ has degree of categoricity $\bfd$ and spectral dimension 1 witnessed by copies $\calP_n[\A_i:i<n]$ and $\calP_n[\B_i:i<n]$.
\end{proof}
\begin{defn}
    An upward-closed set of Turing degrees $\calZ\subseteq\D$ is \emph{generated} by a subset $\X\subseteq\calZ$ if $\calZ=\bigcup_{\bfd\in\X}\D_{\geq}(\bfd)$. We say that $\calZ$ is \emph{finitely generated} if it is generated by a finite set of Turing degrees. That is, if it is equal to a union of finitely-many cones.
\end{defn}
Bazhenov, Kalimullin, and Yamaleev~\cite{BKY16} show that any computable structure with a degree of categoricity and infinite spectral dimension must have a pair of copies whose isomorphism spectrum is not finitely generated.
\begin{thm}[Bazhenov, Kalimullin, Yamaleev \cite{BKY16}, Theorem 3.1]
    Let $\M$ be a computable structure with degree of categoricity $\bfd$, and infinite spectral dimension. Then there are computable copies $\A$ and $\B$ of $\M$ such that $\IsoSpec(\A,\B)$ is not finitely generated.
\end{thm}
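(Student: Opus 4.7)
I would attempt the contrapositive: assuming $\IsoSpec(\A, \B)$ is finitely generated for every pair of computable copies of $\M$, derive that $\M$ has finite spectral dimension, contradicting infinite spectral dimension. Enumerate all pairs $(\A_i, \B_i)_{i<\omega}$ of computable copies of $\M$ and, under this assumption, write $\IsoSpec(\A_i, \B_i) = \bigcup_{j \leq n_i} \D(\geq \bfd_{i,j})$ for finitely many generators $\bfd_{i,j}$. Since $\bfd \in \CatSpec(\M) \subseteq \IsoSpec(\A_i, \B_i)$ for every $i$, at least one generator $\bfd_{i, j_i^*}$ of each spectrum satisfies $\bfd_{i, j_i^*} \leq \bfd$; the remaining generators are either absorbed by $\D(\geq \bfd_{i, j_i^*})$ or incomparable with $\bfd$.

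\textbf{Core step.} Suppose for contradiction that no finite sub-intersection of these spectra equals $\D(\geq \bfd)$. Then for each finite $F \subseteq \omega$ there is a witness degree $\mathbf{c}_F \in \bigcap_{i \in F} \IsoSpec(\A_i, \B_i)$ with $\mathbf{c}_F \not\geq \bfd$, and for each $i \in F$ there is an index $s_F(i) \leq n_i$ with $\mathbf{c}_F \geq \bfd_{i, s_F(i)}$. The selectors $s_F$ populate the finitely branching tree $T = \prod_i \{0, 1, \ldots, n_i\}$, so a K\"onig-style extraction yields an infinite selector $s$ whose every finite initial segment is witnessed by some $\mathbf{c}_F$ avoiding $\D(\geq \bfd)$. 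The remaining task is to produce an actual degree $\mathbf{c}^* \geq \bfd_{i, s(i)}$ for all $i$ with $\mathbf{c}^* \not\geq \bfd$; such $\mathbf{c}^*$ would lie in $\bigcap_i \IsoSpec(\A_i, \B_i) \setminus \D(\geq \bfd)$, contradicting $\CatSpec(\M) = \D(\geq \bfd)$.

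\textbf{Main obstacle.} The crux is producing $\mathbf{c}^*$ from the K\"onig branch $s$. Purely abstractly, an infinite family $\{\bfd_{i, s(i)}\}$ of Turing degrees need not admit a common upper bound below $\bfd$—indeed, one can easily exhibit abstract families of upward-closed sets of the form $\D(\geq \bfd) \cup \D(\geq \mathbf{a}_i)$ where each set is finitely generated, the infinite intersection is $\D(\geq \bfd)$, and no finite sub-intersection suffices, so the conclusion of the theorem \emph{cannot} hold for arbitrary upward-closed sets. Overcoming this requires essential use of the fact that the cones in question arise as isomorphism spectra of specific computable structures—probably by building $\mathbf{c}^*$ explicitly below $\bfd$ via an effective forcing or priority construction that meets each cone $\D(\geq \bfd_{i, s(i)})$ using the uniformly computable content of the pairs $(\A_i, \B_i)$. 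This effective step, which bridges the abstract K\"onig extraction with the computable-structure-theoretic setting, is where I would concentrate the bulk of the technical work.
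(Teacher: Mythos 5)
The paper does not prove this statement at all: it quotes it as Theorem 3.1 of Bazhenov--Kalimullin--Yamaleev and uses it as a black box, so there is no in-paper argument to compare against and your proposal has to be judged on its own terms.

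Your skeleton (assume every pair of copies has a finitely generated spectrum, take witnesses $\mathbf{c}_F\in\bigcap_{i\in F}\IsoSpec(\A_i,\B_i)$ with $\mathbf{c}_F\not\geq\bfd$, and extract a selector $s$ by a K\"onig argument so that for every $k$ the generators $\bfd_{0,s(0)},\dots,\bfd_{k,s(k)}$ have a common upper bound not above $\bfd$) is sound, modulo the routine ``realized by infinitely many initial-segment witnesses'' formulation of the tree. But the proposal stops exactly at the decisive step, and your diagnosis of that step is wrong in both directions. First, the ``easy'' abstract counterexamples you invoke do not exist: if $S_i=\D(\geq\bfd)\cup\D(\geq\mathbf{a}_i)$, $\bigcap_i S_i=\D(\geq\bfd)$, and no finite sub-intersection equals $\D(\geq\bfd)$, then no finite join of the $\mathbf{a}_i$ bounds $\bfd$, so $\bfd$ lies outside the countable ideal the $\mathbf{a}_i$ generate; by Spector's exact pair theorem that ideal has an exact pair $\mathbf{x},\mathbf{y}$, both of which lie in every $S_i$, hence in $\D(\geq\bfd)$, which forces $\bfd$ into the ideal --- a contradiction. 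Second, and for the same reason, no effective forcing or priority construction using the structures is needed to finish your argument: your branch $s$ guarantees $\bfd\not\leq\bfd_{0,s(0)}\vee\dots\vee\bfd_{k,s(k)}$ for every $k$ (each such join is below some $\mathbf{c}_F\not\geq\bfd$), so the countable ideal $I$ generated by $\{\bfd_{i,s(i)}:i<\omega\}$ omits $\bfd$; take an exact pair $\mathbf{x},\mathbf{y}$ for $I$. Both bound every $\bfd_{i,s(i)}$, hence both lie in every $\IsoSpec(\A_i,\B_i)$, hence in $\CatSpec(\M)=\D(\geq\bfd)$; but then $\bfd\leq\mathbf{x}$ and $\bfd\leq\mathbf{y}$ give $\bfd\in I$, i.e.\ $\bfd$ is below a finite join of the $\bfd_{i,s(i)}$, contradicting the branch property. (Equivalently: one member of the exact pair must fail to bound $\bfd$, and that member is exactly the $\mathbf{c}^*$ you were looking for; note also that asking for $\mathbf{c}^*$ ``below $\bfd$'' is an unnecessary extra constraint --- only $\mathbf{c}^*\not\geq\bfd$ is needed.) So as submitted the proof is genuinely incomplete, with the key step deferred to an unspecified and misdirected effective construction, but it can be closed by the purely degree-theoretic exact pair argument above.
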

By combining these results, we obtain the following necessary conditions for any never-strong degree of categoricity:
\begin{thm}\label{cor:never strong}
    If $\bfd$ is the degree of categoricity of a computable structure $\M$ and is not the strong degree of categoricity of any computable structure, then $\M$ has infinite spectral dimension, an infinite automorphism group, and a pair of computable copies $\A$ and $\B$ such that $\IsoSpec(\A,\B)$ is not finitely generated.
\end{thm}
Motivated by this result, we now work toward constructing pairs of computably composite structures whose isomorphism spectra are not finitely generated.

\section{An infinite base structure}\label{sec:H}
In the proof of Theorem~\ref{thm:BKY20}, we used computably composite structures on the finite base structure $\calP_n$ to construct new computable structures whose isomorphism spectra are the intersection of the original $n$ isomorphism spectra. Similar constructions yield finite unions of isomorphism spectra (for example, if $\A_1\cong\B_1$ and $\A_2\cong\B_2$, then undirected paths $\A_1-\A_2-\B_2-\B_1$ and $\A_1-\B_2-\A_2-\B_1$ have isomorphism spectrum $\IsoSpec(\A_1,\B_1)\cup\IsoSpec(\A_2,\B_2)$). In order to obtain examples of isomorphism spectra, we will show in this section that we can form arbitrary \emph{infinite} unions of isomorphism spectra. To do this, we define an infinite base structure, $\calH$, to replace the paths in the previous examples, and show that computably composite structures on $\calH$ are witnesses to the existence of the isomorphism spectra required for Theorem~\ref{thm:comp unions}.
\begin{defn}
    We define the structure $\calH=(H,\{D_i\}_{i<\omega},\{E_i\}_{i<\omega})$ as follows:

    Define $H:=[\omega]^{<\omega}\cup(\omega\times\{0,1\})$, so an element of $H$ is either a finite subset of $\omega$, or a pair $(i,a)$ where $i<\omega$ and $a<2$. By identifying finite sets with their characteristic functions, we may think of $\finsets$ as the collection of infinite binary strings with finitely-many $1$'s. We will also think of $[\omega]^{<\omega}$ as the vertices of an infinite-dimensional hypercube in which vertices $X,Y\in\finsets$ are adjacent if $|X\triangle Y|=1$.
    
    The structure $\calH$ is equipped with undirected edge relations $\{E_i\}_{i<\omega}$ forming the edges of the hypercube running along each dimension. That is, for any $X,Y\in\finsets$ and $i<\omega$,
    \[E_i(X,Y)\Leftrightarrow E_i(Y,X)\Leftrightarrow X\triangle Y=\{i\}.\]
    Figure~\ref{img:H_3_finsets} is an illustration of the 3-dimensional substructure of the hypercube using only the subsets of $\{0,1,2\}$ and the edge relations $E_0$, $E_1$, and $E_2$ between them.
    \begin{figure}[!ht]
        \centering
        \begin{tikzpicture}[align=center,xscale=0.5,yscale=0.5]
            \node (empty) at (0,0) {\footnotesize $\emptyset$};
            \node (0) at (10,0) {\footnotesize $\{0\}$};
            \node (1) at (0,10) {\footnotesize $\{1\}$};
            \node (2) at (4,4) {\footnotesize $\{2\}$};
            \node (01) at (10,10) {\footnotesize $\{0,1\}$};
            \node (02) at (14,4) {\footnotesize $\{0,2\}$};
            \node (12) at (4,14) {\footnotesize $\{1,2\}$};
            \node (012) at (14,14) {\footnotesize $\{0,1,2\}$};
            \path [-, line width=0.4mm, draw=green] (empty) edge (2);
            \path [-, line width=0.4mm, draw=red] (2) edge (02);
            \path [-, line width=0.4mm, draw=blue] (2) edge (12);
            \path [-, line width=0.4mm, draw=red] (empty) edge (0);
            \path [-, line width=0.4mm, draw=blue] (empty) edge (1);
            \path [-, line width=0.4mm, draw=blue] (0) edge (01);
            \path [-, line width=0.4mm, draw=green] (0) edge (02);
            \path [-, line width=0.4mm, draw=red] (1) edge (01);
            \path [-, line width=0.4mm, draw=green] (1) edge (12);
            \path [-, line width=0.4mm, draw=green] (01) edge (012);
            \path [-, line width=0.4mm, draw=red] (12) edge (012);
            \path [-, line width=0.4mm, draw=blue] (02) edge (012);
        \end{tikzpicture}
        \caption{Subsets of $\{0,1,2\}$ and edge relations $E_0$ (red), $E_1$ (blue), and $E_2$ (green).}
        \label{img:H_3_finsets}
    \end{figure}
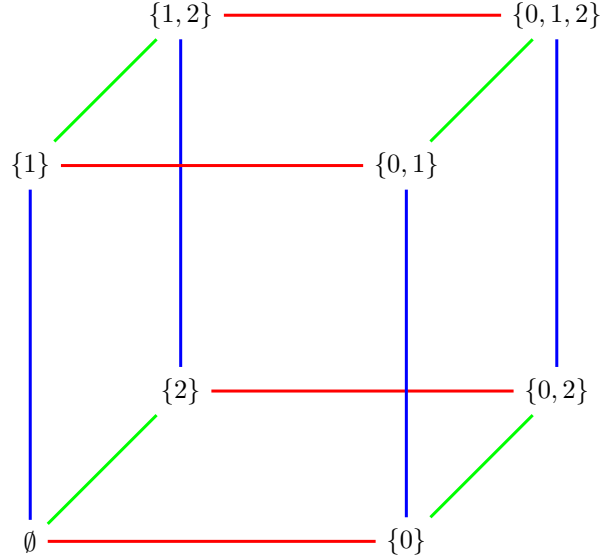
    
    Notice that for any $X\in\finsets$ and any $i<\omega$, $X\triangle\{i\}$ is the unique element of $\finsets$ with the property $E_i(X,X\triangle\{i\})$.\\

    For each $i\in\omega$, we will think of the tuples $(i,0),(i,1)\in H$ as labeling the opposing hyperfaces $\{X\in\finsets:n\notin X\}$ and $\{X\in\finsets:n\in X\}$ of the hypercube as illustrated in Figure~\ref{img:H_3_cols}. The structure $\calH$ is also equipped with $\omega$-many directed edge relations defined so that for any $X\in\finsets$, any $i<\omega$, and any $a<2$,
    \[D_i(X,(i,a))\text{ iff }X(i)=a.\]
    That is, for each finite set $X$ and each $i<\omega$, there is a directed $D_i$-edge leaving $X$. If $i\notin X$, then that edge goes to $(i,0)$, and if $i\in X$, then that edge goes to $(i,1)$. Figure~\ref{img:H_3_cols} illustrates $D_0$, $D_1$, and $D_2$.
    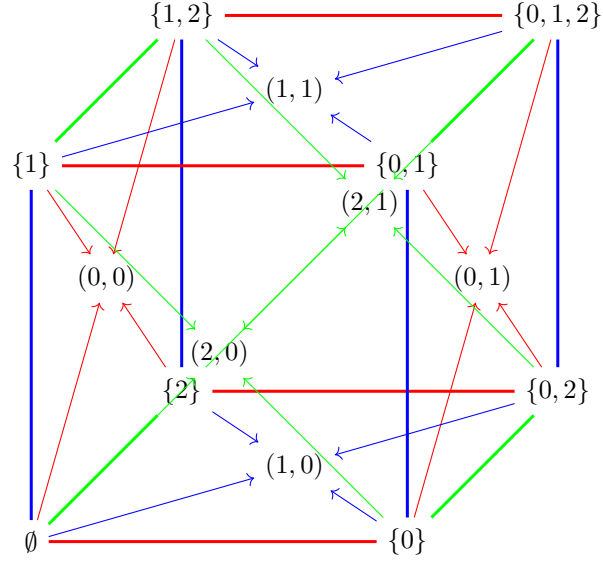
\begin{figure}[!ht]
        \centering
    \begin{tikzpicture}[align=center,xscale=0.5,yscale=0.5]
        \node (empty) at (0,0) {\footnotesize $\emptyset$};
        \node (0) at (10,0) {\footnotesize $\{0\}$};
        \node (1) at (0,10) {\footnotesize $\{1\}$};
        \node (2) at (4,4) {\footnotesize $\{2\}$};
        \node (01) at (10,10) {\footnotesize $\{0,1\}$};
        \node (02) at (14,4) {\footnotesize $\{0,2\}$};
        \node (12) at (4,14) {\footnotesize $\{1,2\}$};
        \node (012) at (14,14) {\footnotesize $\{0,1,2\}$};
        \node (0c0) at (2,7) {\footnotesize $(0,0)$};
        \node (0c1) at (12,7) {\footnotesize $(0,1)$};
        \node (1c0) at (7,2) {\footnotesize $(1,0)$};
        \node (1c1) at (7,12) {\footnotesize $(1,1)$};
        \node (2c0) at (5,5) {\footnotesize $(2,0)$};
        \node (2c1) at (9,9) {\footnotesize $(2,1)$};
        \path [-, line width=0.4mm, draw=green] (empty) edge (2);
        \path [-, line width=0.4mm, draw=red] (2) edge (02);
        \path [-, line width=0.4mm, draw=blue] (2) edge (12);
        \path [-, line width=0.4mm, draw=red] (empty) edge (0);
        \path [-, line width=0.4mm, draw=blue] (empty) edge (1);
        \path [-, line width=0.4mm, draw=blue] (0) edge (01);
        \path [-, line width=0.4mm, draw=green] (0) edge (02);
        \path [-, line width=0.4mm, draw=red] (1) edge (01);
        \path [-, line width=0.4mm, draw=green] (1) edge (12);
        \path [-, line width=0.4mm, draw=green] (01) edge (012);
        \path [-, line width=0.4mm, draw=red] (12) edge (012);
        \path [-, line width=0.4mm, draw=blue] (02) edge (012);
        \path [->, draw=red] (empty) edge (0c0);
        \path [->, draw=red] (1) edge (0c0);
        \path [->, draw=red] (2) edge (0c0);
        \path [->, draw=red] (12) edge (0c0);
        \path [->, draw=red] (0) edge (0c1);
        \path [->, draw=red] (01) edge (0c1);
        \path [->, draw=red] (02) edge (0c1);
        \path [->, draw=red] (012) edge (0c1);
        \path [->, draw=blue] (empty) edge (1c0);
        \path [->, draw=blue] (0) edge (1c0);
        \path [->, draw=blue] (2) edge (1c0);
        \path [->, draw=blue] (02) edge (1c0);
        \path [->, draw=blue] (1) edge (1c1);
        \path [->, draw=blue] (01) edge (1c1);
        \path [->, draw=blue] (12) edge (1c1);
        \path [->, draw=blue] (012) edge (1c1);
        \path [->, draw=green] (empty) edge (2c0);
        \path [->, draw=green] (0) edge (2c0);
        \path [->, draw=green] (1) edge (2c0);
        \path [->, draw=green] (01) edge (2c0);
        \path [->, draw=green] (2) edge (2c1);
        \path [->, draw=green] (02) edge (2c1);
        \path [->, draw=green] (12) edge (2c1);
        \path [->, draw=green] (012) edge (2c1);
    \end{tikzpicture}
        \caption{Subsets of $\{0,1,2\}$, edge relations $E_0$ (red), $E_1$ (blue), and $E_2$ (green), hyperfaces $\{0,1,2\}\times\{0,1\}$, and edge relations $D_0$ (red arrows), $D_1$ (blue arrows), and $D_2$ (green arrows).}
        \label{img:H_3_cols}
    \end{figure}
    
    Using a computable bijection from $\omega$ to $\finsets$, we see that $H$ is computable and that $\{E_i\}_{i<\omega}\cup\{D_i\}_{i<\omega}$ is uniformly computable, so $\calH$ is a computable structure.
\end{defn}
\begin{defn}
    For each $X\in[\omega]^{<\omega}$, we will also define a map $h_X:H\to H$ that is uniformly computable in $X$. For $Y\in[\omega]^{<\omega}$,
    \[h_X(Y):=X\triangle Y,\]
    and for $(i,a)\in\omega\times\{0,1\}$,
    \[h_X(i,a):=(i,a+X(i))=\begin{cases}
        (i,a)&\text{if }i\notin X,\\
        (i,1-a)&\text{if }i\in X.
    \end{cases}\]
\end{defn}
\begin{rem}
     Notice that $h_\emptyset$ is the identity on $H$. Fix $i<\omega$.
     \begin{enumerate}
         \item If $i\notin X$, then $h_{\{i\}}(X)=X\cup\{i\}$ and $h_{\{i\}}(X\cup\{i\})=X$.
         \item $h_{\{i\}}(i,0)=(i,1)$ and $h_{\{i\}}(i,1)=(i,0)$.
         \item If $j\neq i$, $h_{\{i\}}(j,0)=(j,0)$ and $h_{\{i\}}(j,1)=(j,1)$.
     \end{enumerate}
     This shows that $h_{\{i\}}$ is the reflection of the hypercube along the $i^\text{th}$ dimension.
\end{rem}

\subsection{Automorphisms of the base structure}
By Proposition~\ref{composite isos}, isomorphisms between copies of a computably composite structure on $\calH$ depend partly on the isomorphisms between computable copies of $\calH$. We will see that it is sufficient to consider the automorphisms of $\calH$.
\begin{lem}\label{lem:sets to sets, tuples to tuples}
    Let $\theta$ be an automorphism of $\calH$. If $X\in\finsets$, then $\theta(X)\in\finsets$. If $(i,a)\in\omega\times\{0,1\}$, then either $\theta(i,a)=(i,0)$ or $\theta(i,a)=(i,1)$.
\end{lem}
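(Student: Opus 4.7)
The plan is to exploit the asymmetry of the $D_i$ relations to detect both the partition $H=\finsets\cup(\omega\times\{0,1\})$ and the ``column'' structure on $\omega\times\{0,1\}$. Two structural observations drive the argument: (i) the relation $D_j$ holds only when its first argument lies in $\finsets$, so every element of $\omega\times\{0,1\}$ has empty outgoing $D_j$-neighbourhood for each $j<\omega$; and (ii) for each fixed $i<\omega$, the only elements of $H$ that can appear as the second argument of $D_i$ are $(i,0)$ and $(i,1)$. Both observations follow immediately from the fact that the defining clause ``$D_i(X,(i,a))\Leftrightarrow X(i)=a$'' is the \emph{only} way $D_i$ is populated.

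For the first claim, I would fix $X\in\finsets$ and use that $D_0(X,(0,X(0)))$ holds. Since $\theta$ is an automorphism, $D_0(\theta(X),\theta(0,X(0)))$ holds as well, so $\theta(X)$ has an outgoing $D_0$-edge. By observation (i), this forces $\theta(X)\in\finsets$. Applying the same argument to the automorphism $\theta\inv$ shows $\theta\inv(\finsets)\subseteq\finsets$, and combining these with the bijectivity of $\theta$ on $H$ gives that $\theta$ also maps $\omega\times\{0,1\}$ into itself.

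For the second claim, I would fix $(i,a)\in\omega\times\{0,1\}$. By the previous paragraph $\theta(i,a)=(j,b)$ for some $j<\omega$ and $b<2$, and the task is to show $j=i$. Choose a witness $X\in\finsets$ with $X(i)=a$ (e.g.\ $X=\emptyset$ if $a=0$ and $X=\{i\}$ if $a=1$) so that $D_i(X,(i,a))$ holds. Applying $\theta$ yields $D_i(\theta(X),(j,b))$, and observation (ii) forces $j=i$, finishing the proof.

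This is a direct unpacking of the definitions once the correct invariants are identified, so I do not foresee a genuine obstacle. The only point requiring care is the standard relational-structure convention that the relations of $\calH$ hold \emph{only} in the cases explicitly specified --- in particular, that no $D_j$-edge originates outside $\finsets$ and no $D_i$-edge terminates outside $\{(i,0),(i,1)\}$. After that, everything reduces to transporting witnessing $D_i$-edges through $\theta$.
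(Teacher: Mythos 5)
Your proof is correct and follows essentially the same route as the paper: both arguments distinguish $\finsets$ from $\omega\times\{0,1\}$ by the presence or absence of outgoing $D_j$-edges, and pin down the index $i$ by the fact that only $(i,0)$ and $(i,1)$ can receive incoming $D_i$-edges, these invariants being preserved by any automorphism. Your write-up simply makes explicit (via transporting witnessing edges through $\theta$ and $\theta^{-1}$) what the paper's proof states more tersely.
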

\begin{proof}
    We consider the number of inward and outward $D_i$ edges at each point in $H$:
    \begin{enumerate}
        \item If $X\in\finsets$, then $X$ has exactly one outward directed $D_i$-edge for each $i<\omega$.
        \item If $(i,a)\in\omega\times\{0,1\}$, then $(i,a)$ has no outward directed $D_j$-edges for any $j<\omega$.
        \item If $(i,a)\in\omega\times\{0,1\}$, then $(i,a)$ has infinitely-many incoming $D_i$-edges.
        \item If $(i,a)\in\omega\times\{0,1\}$, then $(i,a)$ has no incoming $D_j$-edges when $j\neq i$.
    \end{enumerate}
    As these properties must be preserved by any automorphism, the result follows.
\end{proof}
\begin{lem}\label{Lem:h_X.h_Y}
    For $X,Y\in\finsets$, $h_X\circ h_Y=h_{X\triangle Y}$. In particular, $h_X\circ h_X=h_\emptyset=id_H$ and $h_X\circ h_{\{i\}}=h_{X\cup\{i\}}$ when $i\notin X$.
\end{lem}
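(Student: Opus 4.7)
The plan is a direct case analysis on the two-sorted domain $H = [\omega]^{<\omega} \cup (\omega \times \{0,1\})$, since both sides of $h_X \circ h_Y = h_{X \triangle Y}$ are functions on $H$ and both $h_X$ and $h_Y$ preserve the sorts (they send $[\omega]^{<\omega}$ to itself and $\omega \times \{0,1\}$ to itself). So it suffices to verify agreement on each sort separately.

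For the sort $[\omega]^{<\omega}$, given $Z \in [\omega]^{<\omega}$ I would compute
\[
h_X(h_Y(Z)) = h_X(Y \triangle Z) = X \triangle (Y \triangle Z) = (X \triangle Y) \triangle Z = h_{X \triangle Y}(Z),
\]
using only associativity of symmetric difference, which is immediate from the fact that $\triangle$ corresponds to pointwise addition mod $2$ of characteristic functions.

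For the sort $\omega \times \{0,1\}$, given $(i,a)$ I would unfold the definition twice, working mod $2$ in the second coordinate:
\[
h_X\bigl(h_Y(i,a)\bigr) = h_X\bigl(i,\, a + Y(i)\bigr) = \bigl(i,\, a + Y(i) + X(i)\bigr).
\]
The remaining step is the identity $(X \triangle Y)(i) \equiv X(i) + Y(i) \pmod 2$, which is just the characteristic-function description of symmetric difference. This gives $h_X(h_Y(i,a)) = (i, a + (X \triangle Y)(i)) = h_{X \triangle Y}(i,a)$, completing the main equality.

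The two specializations are then immediate corollaries: $h_X \circ h_X = h_{X \triangle X} = h_{\emptyset} = \mathrm{id}_H$ (the last equality was noted just after the definition of $h_X$), and if $i \notin X$ then $X \triangle \{i\} = X \cup \{i\}$, so $h_X \circ h_{\{i\}} = h_{X \triangle \{i\}} = h_{X \cup \{i\}}$. There is no real obstacle here; the statement is essentially a packaging of associativity of $\triangle$ together with the mod-$2$ arithmetic in the tuple sort, and I expect the proof to occupy only a few lines.
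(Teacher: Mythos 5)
Your proposal is correct and matches the paper's proof essentially verbatim: both check the identity on the two sorts separately, using associativity of $\triangle$ on $\finsets$ and the mod-$2$ identity $(X\triangle Y)(i)=X(i)+Y(i)$ on $\omega\times\{0,1\}$, with the two specializations following immediately.
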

\begin{proof}
    For $Z\in\finsets$,
    \[h_X\circ h_Y(Z)=X\triangle(Y\triangle Z)=(X\triangle Y)\triangle Z=h_{X\triangle Y}(Z).\]
    For $(i,a)\in\omega\times\{0,1\}$,
    \[h_X\circ h_Y(i,a)=h_X(i,a+Y(i))=(i,a+Y(i)+X(i))=(i,a+(X\triangle Y)(i))=h_{X\triangle Y}(i,a).\]
\end{proof}
\begin{prop}\label{prop:h_X auto}
    For each $X\in\finsets$, the map $h_X$ is an automorphism of $\calH$.
\end{prop}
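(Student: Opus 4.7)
The plan is to verify that $h_X$ is a bijection on $H$ and that it preserves each of the relations $E_i$ and $D_i$. Bijectivity comes for free: by Lemma~\ref{Lem:h_X.h_Y}, $h_X \circ h_X = h_\emptyset = \mathrm{id}_H$, so $h_X$ is its own inverse. It is also immediate from the definition of $h_X$ that $h_X$ maps $\finsets$ into $\finsets$ (since $X \triangle Y$ is a finite set when $X,Y$ are) and maps $\omega \times \{0,1\}$ into $\omega \times \{0,1\}$, preserving the first coordinate $i$. This type-separation lets me restrict attention to the cases where the inputs to each relation are of the appropriate type, since in all other cases both sides of the relevant biconditional are false.

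For $E_i$, I need to check that for $Y, Z \in \finsets$, we have $Y \triangle Z = \{i\}$ iff $h_X(Y) \triangle h_X(Z) = \{i\}$. This follows from the identity
\[h_X(Y) \triangle h_X(Z) = (X \triangle Y) \triangle (X \triangle Z) = Y \triangle Z,\]
which uses commutativity and associativity of symmetric difference together with $X \triangle X = \emptyset$.

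For $D_i$, I need to check that for $Y \in \finsets$ and $(j,a) \in \omega \times \{0,1\}$, we have $D_i(Y, (j,a))$ iff $D_i(h_X(Y), h_X(j,a))$. Since $h_X(j,a) = (j, a + X(j))$, the first coordinate $j$ is preserved, so in both directions only the case $j=i$ can give a true relation. In that case, the equivalence to check becomes
\[Y(i) = a \iff (X \triangle Y)(i) = a + X(i),\]
with addition taken mod $2$. This is immediate from $(X \triangle Y)(i) = X(i) + Y(i) \pmod{2}$, which is exactly the mod-$2$ computation already used in the second part of the proof of Lemma~\ref{Lem:h_X.h_Y}.

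There is no substantive obstacle here; the statement is essentially a routine bookkeeping check, and the only thing that could conceivably go wrong is mixing up which elements of $H$ are in the domain of each relation. That is handled at the outset by noting that $h_X$ respects the partition $H = \finsets \sqcup (\omega \times \{0,1\})$.
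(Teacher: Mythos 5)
Your proof is correct and follows essentially the same route as the paper: bijectivity via $h_X\circ h_X=\mathrm{id}_H$ from Lemma~\ref{Lem:h_X.h_Y}, preservation of $E_i$ via $(X\triangle Y)\triangle(X\triangle Z)=Y\triangle Z$, and preservation of $D_i$ via the mod-$2$ identity $(X\triangle Y)(i)=Y(i)+X(i)$. The explicit remark that $h_X$ respects the partition of $H$ into $\finsets$ and $\omega\times\{0,1\}$ is a small bookkeeping point the paper leaves implicit, but it changes nothing substantive.
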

\begin{proof}
    By Lemma~\ref{Lem:h_X.h_Y}, $h_X\circ h_X=id_H$, so $h_X$ is bijective. If $i\notin X$, then $i\in Y\Leftrightarrow i\in(X\triangle Y)$. If $i\in X$, then $i\in Y\Leftrightarrow i\notin(X\triangle Y)$. So in general, we have
    \[Y(i)=a\Leftrightarrow (X\triangle Y)(i)=a+X(i).\]
    The following equivalences show that $h_X$ preserves all relations $E_i$ and $D_i$ in $\calH$ and so is an automorphism:
    \begin{align*}
        D_i(Y,(i,a))&\Leftrightarrow Y(i)=a\\
        &\Leftrightarrow (X\triangle Y)(i)=a+X(i)\\
        &\Leftrightarrow h_X(Y)(i)=a+X(i)\\
        &\Leftrightarrow D_i(h_X(Y),(i,a+X(i)))\\
        &\Leftrightarrow D_i(h_X(Y),h_X(i,a)).
    \end{align*}
    \begin{align*}
        E_i(Y,Z)&\Leftrightarrow Y\triangle Z=\{i\}\\
        &\Leftrightarrow(Y\triangle X)\triangle(Z\triangle X)=\{i\}\\
        &\Leftrightarrow E_i(Y\triangle X,Z\triangle X)\\
        &\Leftrightarrow E_i(h_X(Y),h_X(Z)).
    \end{align*}
\end{proof}
\begin{prop}\label{prop:h_X unique auto}
    For each $X\in\finsets$, the map $h_X$ is the unique automorphism sending $\emptyset$ to $X$.
\end{prop}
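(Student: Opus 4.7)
The plan is to split into existence and uniqueness. Existence is immediate: by Proposition~\ref{prop:h_X auto}, $h_X$ is an automorphism, and $h_X(\emptyset)=X\triangle\emptyset=X$. For uniqueness, suppose $\theta$ is any automorphism with $\theta(\emptyset)=X$. By Lemma~\ref{Lem:h_X.h_Y}, $h_X$ is an involution, so $h_X\circ\theta$ is an automorphism sending $\emptyset$ to $h_X(X)=\emptyset$. Hence it suffices to prove the special case: the only automorphism of $\calH$ fixing $\emptyset$ is the identity. From this, $h_X\circ\theta=\id_H$ and so $\theta=h_X$.

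So assume $\theta\in\Aut(\calH)$ with $\theta(\emptyset)=\emptyset$. First I would show that $\theta$ fixes every tuple $(i,a)\in\omega\times\{0,1\}$. By Lemma~\ref{lem:sets to sets, tuples to tuples}, $\theta(i,a)\in\{(i,0),(i,1)\}$. Since $\emptyset(i)=0$, the edge $D_i(\emptyset,(i,0))$ holds, so applying $\theta$ gives $D_i(\emptyset,\theta(i,0))$. But the unique $z$ with $D_i(\emptyset,z)$ is $(i,0)$, so $\theta(i,0)=(i,0)$. Since $\theta$ restricts to a bijection on $\{(i,0),(i,1)\}$, we also get $\theta(i,1)=(i,1)$.

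Next I would show $\theta$ fixes every $Y\in\finsets$. By Lemma~\ref{lem:sets to sets, tuples to tuples}, $\theta(Y)\in\finsets$. For any $i<\omega$, the relation $D_i(Y,(i,Y(i)))$ holds by definition. Applying $\theta$ and using that $\theta$ fixes the tuples from the previous step, we obtain $D_i(\theta(Y),(i,Y(i)))$, which forces $\theta(Y)(i)=Y(i)$. Since this holds for every $i$, we conclude $\theta(Y)=Y$, and therefore $\theta=\id_H$, completing the uniqueness argument.

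I do not anticipate any serious obstacle here; the argument is essentially bookkeeping once one observes that the $D_i$-relations pin down the coordinates of each $X\in\finsets$ from its outgoing edges to the fixed tuples. The only mildly subtle point is the initial reduction to the case $X=\emptyset$, which relies crucially on $h_X$ being an involution (Lemma~\ref{Lem:h_X.h_Y}).
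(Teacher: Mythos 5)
Your proof is correct and uses the same core argument as the paper: the $D_i$-edges out of $\emptyset$ pin down the images of the tuples $(i,a)$, and then the $D_i$-edges out of each $Y\in\finsets$ pin down $\theta(Y)$ coordinatewise. The only difference is organizational: you first normalize to the case $X=\emptyset$ by composing with the involution $h_X$, whereas the paper carries the general $X$ through the same computation directly; both are fine.
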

\begin{proof}
    Every $h_X$ is an automorphism by Proposition~\ref{prop:h_X auto}. Suppose $\theta:\calH\cong\calH$ is an automorphism and that $\theta(\emptyset)=X$. By Lemma~\ref{lem:sets to sets, tuples to tuples}, for each $i<\omega$ there is an $a_i\in\{0,1\}$ such that $\theta(i,0)=(i,a_i)$ and $\theta(i,1)=(i,1+a_i)$. Since $(\emptyset,(i,0))\in D_i$ and $\theta$ is an automorphism, we have $(\theta(\emptyset),\theta(i,0))=(X,(i,a_i))\in D_i$ for each $i<\omega$. But then by the definition of $D_i$, $a_i=X(i)$ for all $i$, so
    \[\theta(i,0)=(i,a_i)=(i,X(i))=h_X(i,0)\]
    and
    \[\theta(i,1)=(i,1+a_i)=(i,1+X(i))=h_X(i,1).\]
    Thus, $\theta(i,a)=h_X(i,a)$ for all $(i,a)\in\omega\times\{0,1\}$.\\

    Let $Y\in\finsets$, so by Lemma~\ref{lem:sets to sets, tuples to tuples}, $Z:=\theta(Y)\in\finsets$. Since $\theta(i,a)=h_X(i,a)=(i,a+X(i))$, we have
    \[Y(i)=a\Leftrightarrow D_i(Y,(i,a))\Leftrightarrow D_i(Z,(i,a+X(i)))\Leftrightarrow Z(i)=a+X(i),\]
    so $Z(i)=Y(i)+X(i)$ for all $i<\omega$, and so $\theta(Y)=Z=X\triangle Y=h_X(Y)$. Since $Y\in\finsets$ is arbitrary, we have $\theta=h_X$.
\end{proof}
\begin{cor}\label{H autos}
    The set of automorphisms of $\calH$ is $\left\{h_X:X\in\finsets\right\}$.
\end{cor}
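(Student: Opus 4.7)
The plan is to observe that the corollary is essentially a packaging of the two preceding propositions, so very little new work is required. Proposition~\ref{prop:h_X auto} already gives the inclusion $\{h_X : X \in \finsets\} \subseteq \Aut(\calH)$. What remains is the reverse inclusion: every automorphism of $\calH$ has the form $h_X$ for some $X \in \finsets$.

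To prove the reverse inclusion, I would take an arbitrary automorphism $\theta : \calH \cong \calH$ and set $X := \theta(\emptyset)$. The first thing to check is that $X$ actually lies in $\finsets$ rather than in $\omega \times \{0,1\}$; this is immediate from Lemma~\ref{lem:sets to sets, tuples to tuples}, which says that automorphisms of $\calH$ send $\finsets$ to $\finsets$. Once we know $X \in \finsets$, Proposition~\ref{prop:h_X unique auto} tells us that $h_X$ is the \emph{unique} automorphism of $\calH$ sending $\emptyset$ to $X$. Since $\theta$ is also an automorphism sending $\emptyset$ to $X$, we conclude $\theta = h_X$.

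There is no real obstacle here: all the heavy lifting has already been done. The only thing to watch for is making sure $\theta(\emptyset)$ is indeed a valid index for the family $\{h_X\}_{X \in \finsets}$, which is exactly the content of Lemma~\ref{lem:sets to sets, tuples to tuples}. Thus the corollary follows by combining Proposition~\ref{prop:h_X auto}, Lemma~\ref{lem:sets to sets, tuples to tuples}, and Proposition~\ref{prop:h_X unique auto} in a single short argument.
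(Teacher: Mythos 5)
Your proposal is correct and follows exactly the same route as the paper: Proposition~\ref{prop:h_X auto} gives one inclusion, and for the other, Lemma~\ref{lem:sets to sets, tuples to tuples} ensures $\theta(\emptyset)\in\finsets$ so that Proposition~\ref{prop:h_X unique auto} forces $\theta=h_{\theta(\emptyset)}$. No gaps.
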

\begin{proof}
    By Lemma~\ref{lem:sets to sets, tuples to tuples}, every automorphism of $\calH$ sends $\emptyset$ to some $X\in[\omega]^{<\omega}$, so by Proposition~\ref{prop:h_X unique auto} is equal to $h_X$. The result then follows from Proposition~\ref{prop:h_X auto}.
\end{proof}
\begin{rem}
    By Lemma~\ref{Lem:h_X.h_Y}, we have that if $X=\{x_1,x_2,\dots,x_k\}\in\finsets$, then
    \[h_X=h_{\{x_1\}}\circ h_{\{x_2\}}\circ\dots\circ h_{\{x_k\}}.\]
    Since $h_{\{i\}}$ is the reflection of the hypercube along the $i^\text{th}$ dimension, each $h_X$ is a finite composition of reflections along the dimensions corresponding to the elements of $X$.
\end{rem}

From Remark~\ref{comp stable}, the following proposition means that if $\calH[\bfA]$ is a computably composite structure on $\calH$, the computable copies of $\calH[\bfA]$ are up to computable isomorphism exactly the computably composite structures of the form $\calH[\bfB]$ where $\B_z\cong\A_z$ for all $z\in H$. Hence, the complexity of an isomorphism between two computably composite structures on $\calH$ depends only on the isomorphisms between the corresponding component structures.

\begin{prop}\label{H comp isos}
    $\calH$ is computably stable. That is, if $\G$ is a computable copy of $\calH$, then every isomorphism from $\calH$ to $\G$ is computable.
\end{prop}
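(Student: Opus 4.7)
The strategy is to fix $Y_0 := \rho(\emptyset) \in G$ as a parameter and describe a computable procedure that recovers $\rho(h)$ for every $h \in H$ from $Y_0$ together with the computable relations of $\G$. Since $Y_0$ is a specific element of the computable universe $G$, it can be hard-coded into a Turing machine, so the resulting algorithm witnesses that $\rho$ is computable; different choices of $\rho$ just hard-code different values of $Y_0$.

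First I would verify the analog of Lemma~\ref{lem:sets to sets, tuples to tuples} for isomorphisms $\calH \to \G$: the same counting argument on incoming/outgoing $D_i$-edges shows $\rho$ sends $\finsets$ to ``set-type'' elements of $G$ and $\omega \times \{0,1\}$ to ``label-type'' elements. For a nonempty $X = \{x_1 < \cdots < x_k\} \in \finsets$, I recover $\rho(X)$ by walking the image of the hypercube: set $z_0 := Y_0$ and, for $j = 1, \ldots, k$, let $z_j$ be the unique element of $G$ with $E_{x_j}^\G(z_{j-1}, z_j)$. An induction using the fact that each set in $\calH$ has exactly one $E_i$-neighbor shows $z_j = \rho(\{x_1, \ldots, x_j\})$, so $z_k = \rho(X)$. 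The two labels are then obtained by: $\rho(i, 0)$ is the unique $y \in G$ with $D_i^\G(Y_0, y)$ (since $D_i(\emptyset, (i,0))$ in $\calH$ as $\emptyset(i) = 0$), and $\rho(i, 1)$ is the unique $y$ with $D_i^\G(w, y)$ where $w$ is the unique $E_i^\G$-neighbor of $Y_0$ (since $D_i(\{i\}, (i,1))$ in $\calH$ as $\{i\}(i) = 1$).

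There is no substantive obstacle: Proposition~\ref{prop:h_X unique auto} already tells us that $\rho$ is determined by $Y_0$, and the hypercube structure gives an explicit computable recipe for the reconstruction. Every lookup in the procedure is a brute-force search in $G$ that is guaranteed to terminate because the uniqueness of $E_i$- and $D_i$-neighbors in $\calH$ transfers through $\rho$ to guarantee the sought element exists and is unique in $\G$.
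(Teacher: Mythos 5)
Your proposal is correct and follows essentially the same route as the paper: fix the parameter $\rho(\emptyset)$, recover the images of finite sets by walking along the unique $E_i$-edges, and recover $\rho(i,0)$ and $\rho(i,1)$ via the unique outgoing $D_i$-edges from $\rho(\emptyset)$ and $\rho(\{i\})$, with each search terminating because uniqueness in $\calH$ transfers to $\G$ through the isomorphism.
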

\begin{proof}
    Let $f$ be any isomorphism from $\calH$ to $\G$. We may assume that $\G$ has universe $\omega$. Define $n_\emptyset:=f(\emptyset)$. By Proposition~\ref{prop:h_X unique auto}, the automorphisms of $\calH$ are determined by where $\emptyset$ is mapped to. Thus, $f$ is determined by $f(\emptyset)$. We now computably build $f$ out from $\emptyset$.

    Suppose we know $f(X)\in\omega$ for some $X\in\finsets$ and suppose $n\notin X$. Then $X\cup\{n\}$ is the unique element of $\finsets$ with the property $E^\calH_n(X,X\cup\{n\})$. Thus, $f(X\cup\{n\})$ is the unique element of $\omega$ with the property $E^\G_n(f(X),f(X\cup\{n\}))$ and can be found by iterating through $\omega$ in finitely-many steps since $E^\G_n$ is computable. Since we know $f(\emptyset)$, we can find $f(Z)$ for any $Z\in\finsets$ by iterating this process once for each of the (finitely-many) elements of $Z$.
    
    Fix $i\in\omega$. Then $(i,0)$ is the unique element of $H$ with the property $D_i^\calH(\emptyset,(i,0))$. Thus, $f(i,0)$ is the unique element of $\omega$ with the property $D_i^\G(f(\emptyset),f(i,0))$. Since we know $f(\emptyset)$, $f(i,0)$ can be found by iterating through $\omega$ in finitely-many steps. Similarly, $f(i,1)$ is the unique element of $\omega$ with the property $D_i^\G(\{i\},(i,1))$. Since we can computably find $f(\{i\})$, we can find $f(i,1)$ by iterating through $\omega$ in finitely-many steps.
\end{proof}
\begin{cor}\label{H comp cat}
    $\calH$ is computably categorical.
\end{cor}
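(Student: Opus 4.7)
The plan is to observe that computable stability is strictly stronger than computable categoricity, so the corollary is essentially immediate from Proposition~\ref{H comp isos}. Specifically, computable categoricity of $\calH$ asserts that between $\calH$ and any computable copy $\G$ there exists \emph{some} computable isomorphism, while computable stability asserts that \emph{every} isomorphism from $\calH$ to such a $\G$ is computable.

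Concretely, I would argue as follows. Let $\G$ be an arbitrary computable copy of $\calH$. Since $\G \cong \calH$, there exists at least one isomorphism $f : \calH \cong \G$. By Proposition~\ref{H comp isos}, any such $f$ is computable. Hence there is a computable isomorphism between $\calH$ and $\G$, and since $\G$ was an arbitrary computable copy, $\calH$ is computably categorical.

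There is no real obstacle here; the main content was already carried out in the proof of Proposition~\ref{H comp isos}, where the explicit computable procedure for building $f$ from $f(\emptyset)$ was described using the uniqueness of $E_n$-neighbors in $\finsets$ and the uniqueness of $D_i$-targets from $\emptyset$ and $\{i\}$. The corollary merely repackages that stronger statement into the weaker standard notion of computable categoricity.
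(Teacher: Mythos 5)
Your argument is correct and matches the paper's intent exactly: the corollary is stated without proof precisely because computable stability (Proposition~\ref{H comp isos}) immediately yields computable categoricity, since any isomorphic computable copy admits some isomorphism and stability makes every such isomorphism computable. No further comment is needed.
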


\subsection{Computable unions of isomorphism spectra}
We now show that the class of isomorphism spectra is closed under computable unions:
\begin{thm}\label{thm:comp unions}
    Given any two uniformly computable collections of copies $\bfA=\{\A_i:i<\omega\}$ and $\bfB=\{\B_i:i<\omega\}$ such that for each $i$, $\A_i\cong\B_i$, there exists a structure with two computable copies $\M\cong\calN$ where $\IsoSpec(\M,\calN)=\bigcup_{i<\omega}\IsoSpec(\A_i,\B_i)$.
\end{thm}

Fix two arbitrary uniformly computable collections of copies $\bfA=\{\A_i:i<\omega\}$ and $\bfB=\{\B_i:i<\omega\}$ with arbitrary uniformly computable relational languages $\{\Lang_i:i<\omega\}$ such that for each $i$, $\A_i\cong\B_i$. We will define $\M$ and $\calN$, two computably composite structures on $\calH$, and then verify that they satisfy the statement of Theorem~\ref{thm:comp unions}.
\begin{defn}\label{M N defn}
    We first define uniformly computable collections of copies $\bfM=\{\M_z:z\in H\}$ and $\bfN=\{\calN_z:z\in H\}$. Each $\M_z$ and each $\calN_z$ will be of the form $\{z\}\times\A_k$ or $\{z\}\times\B_k$ for some $k$, ensuring that the universes are disjoint (see Figure~\ref{img:H_3_AB}):
    \begin{figure}[!ht]
        \centering
        \begin{tikzpicture}[align=center,xscale=0.43,yscale=0.43]
            \node (empty) at (0,0) {\tiny $\{\emptyset\}\times\mathcal{A}_0$};
            \node (0) at (10,0) {\tiny $\{\{0\}\}\times\mathcal{B}_0$};
            \node (1) at (0,10) {\tiny $\{\{1\}\}\times\mathcal{B}_0$};
            \node (2) at (4,4) {\tiny $\{\{2\}\}\times\mathcal{B}_0$};
            \node (01) at (10,10) {\tiny $\{\{0,1\}\}\times\mathcal{A}_0$};
            \node (02) at (14,4) {\tiny $\{\{0,2\}\}\times\mathcal{A}_0$};
            \node (12) at (4,14) {\tiny $\{\{1,2\}\}\times\mathcal{A}_0$};
            \node (012) at (14,14) {\tiny $\{\{0,1,2\}\}\times\mathcal{B}_0$};
            \node (0c0) at (2,7) {\tiny $\{(0,0)\}\times\mathcal{A}_1$};
            \node (0c1) at (12,7) {\tiny $\{(0,1)\}\times\mathcal{B}_1$};
            \node (1c0) at (7,2) {\tiny $\{(1,0)\}\times\mathcal{A}_2$};
            \node (1c1) at (7,12) {\tiny $\{(1,1)\}\times\mathcal{B}_2$};
            \node (2c0) at (5,5) {\tiny $\{(2,0)\}\times\mathcal{A}_3$};
            \node (2c1) at (9,9) {\tiny $\{(2,1)\}\times\mathcal{B}_3$};
            \path [-, line width=0.3mm, draw=green] (empty) edge (2);
            \path [-, line width=0.3mm, draw=red] (2) edge (02);
            \path [-, line width=0.3mm, draw=blue] (2) edge (12);
            \path [-, line width=0.3mm, draw=red] (empty) edge (0);
            \path [-, line width=0.3mm, draw=blue] (empty) edge (1);
            \path [-, line width=0.3mm, draw=blue] (0) edge (01);
            \path [-, line width=0.3mm, draw=green] (0) edge (02);
            \path [-, line width=0.3mm, draw=red] (1) edge (01);
            \path [-, line width=0.3mm, draw=green] (1) edge (12);
            \path [-, line width=0.3mm, draw=green] (01) edge (012);
            \path [-, line width=0.3mm, draw=red] (12) edge (012);
            \path [-, line width=0.3mm, draw=blue] (02) edge (012);
            \path [->, draw=red] (empty) edge (0c0);
            \path [->, draw=red] (1) edge (0c0);
            \path [->, draw=red] (2) edge (0c0);
            \path [->, draw=red] (12) edge (0c0);
            \path [->, draw=red] (0) edge (0c1);
            \path [->, draw=red] (01) edge (0c1);
            \path [->, draw=red] (02) edge (0c1);
            \path [->, draw=red] (012) edge (0c1);
            \path [->, draw=blue] (empty) edge (1c0);
            \path [->, draw=blue] (0) edge (1c0);
            \path [->, draw=blue] (2) edge (1c0);
            \path [->, draw=blue] (02) edge (1c0);
            \path [->, draw=blue] (1) edge (1c1);
            \path [->, draw=blue] (01) edge (1c1);
            \path [->, draw=blue] (12) edge (1c1);
            \path [->, draw=blue] (012) edge (1c1);
            \path [->, draw=green] (empty) edge (2c0);
            \path [->, draw=green] (0) edge (2c0);
            \path [->, draw=green] (1) edge (2c0);
            \path [->, draw=green] (01) edge (2c0);
            \path [->, draw=green] (2) edge (2c1);
            \path [->, draw=green] (02) edge (2c1);
            \path [->, draw=green] (12) edge (2c1);
            \path [->, draw=green] (012) edge (2c1);
        \end{tikzpicture}
        \hspace{1em}
    \begin{tikzpicture}[align=center,xscale=0.43,yscale=0.43]
        \node (empty) at (0,0) {\tiny $\{\emptyset\}\times\mathcal{B}_0$};
        \node (0) at (10,0) {\tiny $\{\{0\}\}\times\mathcal{A}_0$};
        \node (1) at (0,10) {\tiny $\{\{1\}\}\times\mathcal{A}_0$};
        \node (2) at (4,4) {\tiny $\{\{2\}\}\times\mathcal{A}_0$};
        \node (01) at (10,10) {\tiny $\{\{0,1\}\}\times\mathcal{B}_0$};
        \node (02) at (14,4) {\tiny $\{\{0,2\}\}\times\mathcal{B}_0$};
        \node (12) at (4,14) {\tiny $\{\{1,2\}\}\times\mathcal{B}_0$};
        \node (012) at (14,14) {\tiny $\{\{0,1,2\}\}\times\mathcal{A}_0$};
        \node (0c0) at (2,7) {\tiny $\{(0,0)\}\times\mathcal{A}_1$};
        \node (0c1) at (12,7) {\tiny $\{(0,1)\}\times\mathcal{B}_1$};
        \node (1c0) at (7,2) {\tiny $\{(1,0)\}\times\mathcal{A}_2$};
        \node (1c1) at (7,12) {\tiny $\{(1,1)\}\times\mathcal{B}_2$};
        \node (2c0) at (5,5) {\tiny $\{(2,0)\}\times\mathcal{A}_3$};
        \node (2c1) at (9,9) {\tiny $\{(2,1)\}\times\mathcal{B}_3$};
        \path [-, line width=0.3mm, draw=green] (empty) edge (2);
        \path [-, line width=0.3mm, draw=red] (2) edge (02);
        \path [-, line width=0.3mm, draw=blue] (2) edge (12);
        \path [-, line width=0.3mm, draw=red] (empty) edge (0);
        \path [-, line width=0.3mm, draw=blue] (empty) edge (1);
        \path [-, line width=0.3mm, draw=blue] (0) edge (01);
        \path [-, line width=0.3mm, draw=green] (0) edge (02);
        \path [-, line width=0.3mm, draw=red] (1) edge (01);
        \path [-, line width=0.3mm, draw=green] (1) edge (12);
        \path [-, line width=0.3mm, draw=green] (01) edge (012);
        \path [-, line width=0.3mm, draw=red] (12) edge (012);
        \path [-, line width=0.3mm, draw=blue] (02) edge (012);
        \path [->, draw=red] (empty) edge (0c0);
        \path [->, draw=red] (1) edge (0c0);
        \path [->, draw=red] (2) edge (0c0);
        \path [->, draw=red] (12) edge (0c0);
        \path [->, draw=red] (0) edge (0c1);
        \path [->, draw=red] (01) edge (0c1);
        \path [->, draw=red] (02) edge (0c1);
        \path [->, draw=red] (012) edge (0c1);
        \path [->, draw=blue] (empty) edge (1c0);
        \path [->, draw=blue] (0) edge (1c0);
        \path [->, draw=blue] (2) edge (1c0);
        \path [->, draw=blue] (02) edge (1c0);
        \path [->, draw=blue] (1) edge (1c1);
        \path [->, draw=blue] (01) edge (1c1);
        \path [->, draw=blue] (12) edge (1c1);
        \path [->, draw=blue] (012) edge (1c1);
        \path [->, draw=green] (empty) edge (2c0);
        \path [->, draw=green] (0) edge (2c0);
        \path [->, draw=green] (1) edge (2c0);
        \path [->, draw=green] (01) edge (2c0);
        \path [->, draw=green] (2) edge (2c1);
        \path [->, draw=green] (02) edge (2c1);
        \path [->, draw=green] (12) edge (2c1);
        \path [->, draw=green] (012) edge (2c1);
    \end{tikzpicture}
        \caption{The first three dimensions of $\M=\calH[\bfM]$ (left) and $\calN=\calH[\bfN]$ (right).}
        \label{img:H_3_AB}
    \end{figure}
    \begin{enumerate}
        \item For $i\in\omega$, define $\M_{(i,0)}=\calN_{(i,0)}:=\{(i,0)\}\times\A_{i+1}$.
        \item For $i\in\omega$, define $\M_{(i,1)}=\calN_{(i,1)}:=\{(i,1)\}\times\B_{i+1}$.
        \item For $X\in\finsets$, define $\M_X:=\begin{cases}
            \{X\}\times\A_0&\text{if $|X|$ is even},\\
            \{X\}\times\B_0&\text{if $|X|$ is odd}.
        \end{cases}$
        \item For $X\in\finsets$, define $\calN_X:=\begin{cases}
            \{X\}\times\B_0&\text{if $|X|$ is even},\\
            \{X\}\times\A_0&\text{if $|X|$ is odd}.
        \end{cases}$
    \end{enumerate}
    For $X,Y\in\finsets$, define functions $\alpha_{X,Y}$ and $\beta_{X,Y}$ so that for $a\in A_0$, $\alpha_{X,Y}(X,a):=(Y,a)$, and for $b\in B_0$, $\beta_{X,Y}(X,b):=(Y,b)$. Notice that $\alpha_{X,Y}$ is a computable isomorphism from $\{X\}\times\A_0$ to $\{Y\}\times\A_0$ and $\beta_{X,Y}$ is a computable isomorphism from $\{X\}\times\B_0$ to $\{Y\}\times\B_0$. Thus, every copy in $\{\M_X,\calN_X:X\in\finsets\}$ is computably isomorphic to either $\M_\emptyset$ via $\alpha_{X,\emptyset}$ or to $\calN_\emptyset$ via $\beta_{X,\emptyset}$. The copies $\M_\emptyset$ and $\calN_\emptyset$ are isomorphic, but possibly not computably so. Moreover, $\alpha_{X,Y}$ and $\beta_{X,Y}$ are uniformly computable in $X$ and $Y$. Finally, define the computably composite structures
    \[\M:=\calH[\bfM]\text{ and }\calN:=\calH[\bfN].\]
    For every $z\in H$, $\M_z$ and $\calN_z$ are isomorphic, so the computably composite structures $\M$ and $\calN$ are isomorphic.
\end{defn}
\begin{prop}\label{H isos}
    The isomorphisms from $\M$ to $\calN$ are exactly the maps
    \[h_X\cup\bigcup_{z\in H}\psi_z\]
    where $X\in\finsets$ and $\psi_z:\M_z\cong\calN_{h_X(z)}$ for each $z\in H$.
\end{prop}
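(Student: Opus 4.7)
The strategy is to plug into the general characterization of isomorphisms between computably composite structures, using the classification of $\Aut(\calH)$ that was just established. Concretely, Proposition~\ref{composite isos} tells us that every isomorphism $\rho:\M\to\calN$ (since $\M=\calH[\bfM]$ and $\calN=\calH[\bfN]$) has the form $\rho=\theta\cup\bigcup_{z\in H}\psi_z$, where $\theta:\calH\cong\calH$ and $\psi_z:\M_z\cong\calN_{\theta(z)}$ for each $z\in H$. By Corollary~\ref{H autos}, the only candidates for $\theta$ are the maps $h_X$ with $X\in\finsets$. That already gives the ``every isomorphism has the stated form'' direction.

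For the converse direction, I would fix an arbitrary $X\in\finsets$ and a family $\{\psi_z\}_{z\in H}$ with $\psi_z:\M_z\cong\calN_{h_X(z)}$, and invoke the other half of Proposition~\ref{composite isos} to conclude that $h_X\cup\bigcup_{z\in H}\psi_z$ is an isomorphism. The only thing that still needs to be checked is that such component isomorphisms $\psi_z$ genuinely exist, i.e.\ that $\M_z\cong\calN_{h_X(z)}$ for every $z\in H$.

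The existence check splits into routine cases according to the form of $z$ (using Definition~\ref{M N defn}). For $z=(i,a)\in\omega\times\{0,1\}$ we have $h_X(z)=(i,a+X(i))$, and inspecting the two possibilities ($i\in X$ versus $i\notin X$) one sees that $\M_z$ and $\calN_{h_X(z)}$ are either literally the same relabelled copy of $\A_{i+1}$ (or $\B_{i+1}$), or are $\A_{i+1}$ and $\B_{i+1}$ (which are isomorphic by hypothesis). For $z=Y\in\finsets$ we have $h_X(Y)=X\triangle Y$, and since $|X\triangle Y|\equiv|X|+|Y|\pmod{2}$, a short parity analysis on $|X|$ shows that in every case $\M_Y$ and $\calN_{X\triangle Y}$ are either both copies of $\A_0$, both copies of $\B_0$, or one of each (again isomorphic by the standing assumption $\A_0\cong\B_0$).

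I do not expect any real obstacle here: the heavy lifting — characterizing isomorphisms of computably composite structures in terms of base and component isomorphisms, and pinning down $\Aut(\calH)$ — has been done in Propositions~\ref{composite isos} and~\ref{prop:h_X unique auto} and Corollary~\ref{H autos}. The ``hardest'' step is merely the parity bookkeeping above, which is the whole reason Definition~\ref{M N defn} alternates $\A_0$ and $\B_0$ by parity on $\finsets$.
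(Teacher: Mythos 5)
Your proposal is correct and follows the same route as the paper, whose proof is exactly the one-line observation that the statement follows from Proposition~\ref{composite isos} together with the classification of $\Aut(\calH)$ in Corollary~\ref{H autos}. The additional parity check that $\M_z\cong\calN_{h_X(z)}$ actually holds for every $z$ is harmless extra detail (the characterization itself does not need it), and it matches the verifications the paper later carries out in Lemmas~\ref{Mempty iso comp M iso} and~\ref{Mi0 iso comp M iso}.
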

\begin{proof}
    This follows directly from Proposition~\ref{composite isos} and Proposition~\ref{H autos}.
\end{proof}
The rest of this section discusses the Turing degrees of these isomorphisms and the isomorhpism spectrum $\IsoSpec(\M,\calN)$.
\begin{lem}\label{Mempty iso comp M iso}
    Let $\theta:\A_0\cong\B_0$. Then there is an isomorphism $\rho:\M\cong\calN$ such that $\rho\equiv_T\theta$.
\end{lem}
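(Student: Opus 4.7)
The plan is to invoke Proposition~\ref{H isos} with the simplest possible choice of base-level automorphism, namely $h_\emptyset = \mathrm{id}_H$, so that I only need to exhibit isomorphisms $\psi_z : \M_z \cong \calN_z$ for each $z \in H$ and check that the resulting union has the correct Turing degree. The copies $\M_z$ and $\calN_z$ are either identical (when $z = (i,a)$, since both equal $\{(i,a)\}\times \A_{i+1}$ or $\{(i,a)\}\times \B_{i+1}$) or they differ only in which of $\A_0,\B_0$ is used to label $z$ (when $z = X \in \finsets$), so in every case the required $\psi_z$ can be manufactured directly from $\theta$ or from the identity.

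Concretely, I would define
\[
\psi_{(i,a)} := \mathrm{id}_{\{(i,a)\}\times\A_{i+1}} \text{ or } \mathrm{id}_{\{(i,a)\}\times\B_{i+1}},
\]
and for $X \in \finsets$,
\[
\psi_X(X,a) := (X,\theta(a)) \text{ if } |X| \text{ is even}, \qquad
\psi_X(X,b) := (X,\theta^{-1}(b)) \text{ if } |X| \text{ is odd}.
\]
Then $\psi_X : \M_X \cong \calN_X$ in each case, and setting $\rho := \mathrm{id}_H \cup \bigcup_{z \in H}\psi_z$ yields an isomorphism $\M \cong \calN$ by Proposition~\ref{H isos}.

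For the Turing equivalence, the reduction $\theta \leq_T \rho$ is immediate because $\rho$ restricted to $\{\emptyset\}\times A_0$ is the map $(\emptyset,a) \mapsto (\emptyset,\theta(a))$, from which $\theta$ is trivially recoverable. For $\rho \leq_T \theta$, given an element $u$ of the universe of $\M$, I can computably decide which of the three cases it belongs to (an element of $H$, an element of some $\{X\}\times \A_0$ or $\{X\}\times\B_0$, or an element of some $\{(i,a)\}\times\A_{i+1}$ or $\{(i,a)\}\times\B_{i+1}$) using the uniform computability of all the relevant universes from Definition~\ref{M N defn}, and then apply either the identity, $\theta$, or $\theta^{-1}$ accordingly. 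Note that $\theta^{-1} \leq_T \theta$ since both are bijections between computable sets.

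There is essentially no hard step here: once Proposition~\ref{H isos} reduces the problem to choosing an automorphism of $\calH$ and a family of component isomorphisms, the lemma is really a bookkeeping exercise confirming uniformity and the two direction Turing reductions. The only thing to be careful about is that the case split in the definition of $\psi_X$ (even versus odd $|X|$) is computable in $X$, which it clearly is, so assembling the pieces uniformly in $\theta$ presents no obstacle.
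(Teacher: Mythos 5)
Your proposal is correct and follows essentially the same route as the paper: take $h_\emptyset=\mathrm{id}_H$, use identity maps on the $(i,a)$-components, and apply $\theta$ (respectively $\theta^{-1}$) on the second coordinate of $\M_X$ according to the parity of $|X|$, which is exactly the paper's $\beta_{\emptyset,Y}\circ\wh{\theta}\circ\alpha_{Y,\emptyset}$ and $\alpha_{\emptyset,Y}\circ\wh{\theta}^{-1}\circ\beta_{Y,\emptyset}$ written out explicitly. The Turing-equivalence argument (recover $\theta$ from $\rho\restriction_{\{\emptyset\}\times A_0}$; compute $\rho$ from $\theta$ by a computable case split) matches the paper's as well.
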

\begin{proof}
    Since $\M_\emptyset=\{\emptyset\}\times\A_0$ and $\calN_\emptyset=\{\emptyset\}\times\B_0$, the function $\wh{\theta}(\emptyset,a):=(\emptyset,\theta(a))$ is an isomorphism from $\M_\emptyset$ to $\calN_{\emptyset}$ and $\wh{\theta}\equiv_T\theta$. Define the isomorphism $\rho=h_X\cup\bigcup_{z\in H}\psi_z$ as follows:
    
    Take $X=\emptyset$, so $h_X=h_\emptyset=id_H$. We need to define each $\psi_z:\M_z\cong\calN_{h_X(z)}=\calN_z$:
    \begin{enumerate}
        \item For $z=(i,a)\in\omega\times\{0,1\}$, $\M_{(i,a)}=\calN_{(i,a)}$, so we may take $\psi_{(i,a)}:=id_{M_{(i,a)}}$.
        \item For $z=Y\in\finsets$ with $|Y|$ even we have $\alpha_{Y,\emptyset}:\M_Y\cong\M_\emptyset$ and $\beta_{\emptyset,Y}:\calN_\emptyset\cong\calN_Y$, so let
        \[\psi_Y:=\beta_{\emptyset,Y}\circ\wh{\theta}\circ\alpha_{Y,\emptyset}:\M_Y\cong\calN_Y=\calN_{h_X(Y)}.\]
        \item For $z=Y\in\finsets$ with $|Y|$ odd we have $\beta_{Y,\emptyset}:\M_Y\cong\calN_\emptyset$ and $\alpha_{\emptyset,Y}:\M_\emptyset\cong\calN_Y$, so let
        \[\psi_Y:=\alpha_{\emptyset,Y}\circ\wh{\theta}\inv\circ\beta_{Y,\emptyset}:\M_Y\cong\calN_Y=\calN_{h_X(Y)}.\]
    \end{enumerate}
    Since $\alpha_{X,Y}$ and $\beta_{X,Y}$ (as in Definition~\ref{M N defn}) are uniformly computable in $X$ and $Y$, the collection $\{\psi_Y:Y\in\finsets\}$ is uniformly computable in $\wh{\theta}$ and $\psi_Y\equiv_T\wh{\theta}$ for each $Y$. Thus, $\wh{\theta}\equiv_T\bigcup_{z\in H}\psi_z$. Since $h_\emptyset=id_H$ is computable, we have $\rho=id_\emptyset\cup\bigcup_{z\in H}\psi_z\equiv_T\wh{\theta}\equiv_T\theta$.
\end{proof}
\begin{lem}\label{Mi0 iso comp M iso}
    Fix $i<\omega$ and let $\theta:\A_{i+1}\cong\B_{i+1}$. Then there is an isomorphism $\rho:\M\cong\calN$ such that $\rho\equiv_T\theta$.
\end{lem}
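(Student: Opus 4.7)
The plan is to mimic the structure of the proof of Lemma~\ref{Mempty iso comp M iso}, but instead of taking $X = \emptyset$ (so that $h_X$ is the identity and we transport $\theta:\A_0\cong\B_0$ across the hypercube via the uniformly computable $\alpha$ and $\beta$ maps), we take $X = \{i\}$. The key observation is that $h_{\{i\}}$ swaps $(i,0)$ and $(i,1)$, and these are precisely the two points whose component structures $\M_{(i,0)} = \{(i,0)\}\times\A_{i+1}$ and $\calN_{(i,1)} = \{(i,1)\}\times\B_{i+1}$ involve $\A_{i+1}$ and $\B_{i+1}$. So $\theta$ gets used exactly at these two components, and everything else becomes computable.

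Concretely, first define $\wh{\theta}((i,0),a) := ((i,1),\theta(a))$, which is an isomorphism $\M_{(i,0)}\cong\calN_{h_{\{i\}}(i,0)}=\calN_{(i,1)}$ of the same Turing degree as $\theta$. Its inverse $\wh{\theta}^{-1}$ gives an isomorphism $\M_{(i,1)}\cong\calN_{(i,0)}=\calN_{h_{\{i\}}(i,1)}$. For the remaining tuple points $(j,a)$ with $j\neq i$, we have $h_{\{i\}}(j,a)=(j,a)$ and $\M_{(j,a)}=\calN_{(j,a)}$, so take $\psi_{(j,a)}:=\mathrm{id}$. For $Y\in\finsets$, $h_{\{i\}}(Y)=Y\triangle\{i\}$ has opposite parity to $Y$, so by Definition~\ref{M N defn} both $\M_Y$ and $\calN_{Y\triangle\{i\}}$ are of the form $\{\cdot\}\times\A_0$ or both of the form $\{\cdot\}\times\B_0$; in either case take $\psi_Y$ to be the appropriate $\alpha_{Y,Y\triangle\{i\}}$ or $\beta_{Y,Y\triangle\{i\}}$, each of which is (uniformly) computable.

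Now define $\rho := h_{\{i\}} \cup \bigcup_{z\in H}\psi_z$. By Proposition~\ref{H isos}, $\rho$ is an isomorphism $\M\cong\calN$. For the Turing-equivalence: $h_{\{i\}}$ is computable, each $\psi_{(j,a)}$ with $j\neq i$ is computable, and the collection $\{\psi_Y : Y\in\finsets\}$ is uniformly computable because $\alpha$ and $\beta$ are. Only $\psi_{(i,0)}$ and $\psi_{(i,1)}$ require oracle information, and they are each Turing-equivalent to $\theta$. Hence $\rho\leq_T\theta$, and conversely $\theta$ is computable from $\rho$ by reading off the $\M_{(i,0)}$-component, so $\rho\equiv_T\theta$.

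There is essentially no obstacle here once one sees the right choice of $X$; the mild bookkeeping step is just checking that the parity shift induced by $X=\{i\}$ matches the $\A_0/\B_0$ assignment in Definition~\ref{M N defn} so that the $\psi_Y$ for finite sets can be taken from the computable $\alpha,\beta$ family rather than requiring an $\A_0\cong\B_0$ isomorphism — which is exactly why $\theta$ only needs to appear at the two $(i,a)$ components.
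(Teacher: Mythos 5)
Your proposal is correct and follows essentially the same route as the paper's proof: take $X=\{i\}$, put $\psi_{(i,0)}=\wh{\theta}$, $\psi_{(i,1)}=\wh{\theta}^{-1}$, identities at the other tuple points, and the uniformly computable $\alpha_{Y,Y\triangle\{i\}}$ or $\beta_{Y,Y\triangle\{i\}}$ (according to the parity of $|Y|$) on the hypercube vertices, then read off the Turing equivalence $\rho\equiv_T\theta$. The parity bookkeeping you flag is exactly the point the paper's construction relies on, and you verify it correctly.
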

\begin{proof}
    Since $\M_{i,0}=\{(i,0)\}\times\A_{i+1}$ and $\M_{i,1}=\{(i,1)\}\times\B_{i+1}$, the function $\wh{\theta}((i,0),a):=((i,1),\theta(a))$ is an isomorphism from $\M_{(i,0)}$ to $\M_{(i,1)}$ and $\wh{\theta}\equiv_T\theta$. Define the isomorphism $\rho=h_X\cup\bigcup_{z\in H}\psi_z$ as follows:
    
    Take $X=\{i\}$, so $h_X:=h_{\{i\}}$ which permutes $(i,0)$ with $(i,1)$, permutes each $Y\in\finsets$ with $Y\triangle\{i\}$, and fixes all $(j,a)$ where $j\neq i$. We need to define each $\psi_z:\M_z\cong\calN_{h_{\{i\}}(z)}$.
    \begin{enumerate}
        \item We have $\calN_{h_{\{i\}}(i,0)}=\calN_{(i,1)}=\M_{(i,1)}$ and $\calN_{h_{\{i\}}(i,1)}=\calN_{(i,0)}=\M_{(i,0)}$, so we may take $\psi_{(i,0)}:=\wh{\theta}$ and $\psi_{(i,1)}:=\wh{\theta}\inv$.
        \item For $j\neq i$ and $a\in\{0,1\}$,  $\calN_{h_{\{i\}}(j,a)}=\calN_{(j,a)}=\M_{(j,a)}$, so let $\psi_{(j,a)}:=id_{\M_{(j,a)}}$.
        \item For $|Y|$ even, take $\psi_Y:=\alpha_{Y,Y\triangle\{i\}}:\M_Y\cong\calN_{Y\triangle\{i\}}=\calN_{h_{\{i\}}(Y)}$.
        \item For $|Y|$ odd, take $\psi_Y:=\beta_{Y,Y\triangle\{i\}}:\M_Y\cong\calN_{Y\triangle\{i\}}=\calN_{h_{\{i\}}(Y)}$.
    \end{enumerate}
    Then all $\psi_z$ are uniformly computable in $z$ except for $\psi_{(i,0)}=\wh{\theta}$ and $\psi_{(i,1)}=\wh{\theta}\inv$. Since $h_i$ is computable, $\rho=h_{\{i\}}\cup\bigcup_{z\in H}\psi_z\equiv_T\wh{\theta}\equiv_T\theta$.
\end{proof}
\begin{lem}\label{M iso comp component iso}
    Let $\rho:\M\cong\calN$ be any isomorphism. Then there is an $n<\omega$ and $\theta:\A_n\cong\B_n$ such that $\rho\geq_T\theta$.
\end{lem}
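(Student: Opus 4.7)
The plan is to exploit Proposition~\ref{H isos}, which decomposes any isomorphism $\rho:\M\cong\calN$ as $\rho = h_X \cup \bigcup_{z\in H}\psi_z$ for some $X\in\finsets$ and component isomorphisms $\psi_z:\M_z\cong\calN_{h_X(z)}$. The key observation is that because $\bfM$ is uniformly computable with pairwise disjoint universes, each universe $M_z$ is a computable subset of the universe of $\M$, so the restriction $\psi_z = \rho\restrict_{M_z}$ is computable from $\rho$. In particular, $X = \rho(\emptyset)\in\finsets$ is itself computable from $\rho$.

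My strategy is then to pick some $z$ for which the domain and codomain of $\psi_z$ have the form $\{z\}\times\A_n$ and $\{h_X(z)\}\times\B_n$ (rather than being two copies of $\A_n$ or two copies of $\B_n$), and strip coordinates to obtain $\theta:\A_n\cong\B_n$. I would split into two cases based on the parity of $|X|$. If $|X|$ is even (including the case $X=\emptyset$), then by Definition~\ref{M N defn} we have $\M_\emptyset=\{\emptyset\}\times\A_0$ and $\calN_{h_X(\emptyset)}=\calN_X=\{X\}\times\B_0$, so $\psi_\emptyset$ is an isomorphism between these two copies and the map $\theta(a):=\pi_2(\psi_\emptyset(\emptyset,a))$, where $\pi_2$ is the second-coordinate projection, is an isomorphism $\A_0\cong\B_0$ with $\theta\leq_T \psi_\emptyset\leq_T\rho$. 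Take $n=0$.

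If $|X|$ is odd, then $X\neq\emptyset$, so I can pick (computably from $\rho$) the least $i\in X$. Then $h_X(i,0)=(i,X(i))=(i,1)$, so $\psi_{(i,0)}$ is an isomorphism from $\M_{(i,0)}=\{(i,0)\}\times\A_{i+1}$ to $\calN_{(i,1)}=\{(i,1)\}\times\B_{i+1}$. Setting $\theta(a):=\pi_2(\psi_{(i,0)}((i,0),a))$ yields an isomorphism $\A_{i+1}\cong\B_{i+1}$ with $\theta\leq_T\psi_{(i,0)}\leq_T\rho$; take $n=i+1$.

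There is no substantive obstacle here: once Proposition~\ref{H isos} is in hand, the argument is just bookkeeping to locate a component $z$ whose domain is an $\A$-copy and whose image is a $\B$-copy, which exists by a parity check on $|X|$. All of the reductions involved (restricting $\rho$ to a computable subset, projecting onto a coordinate) are uniformly computable, so the claim $\rho\geq_T\theta$ is immediate.
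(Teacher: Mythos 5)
Your proof is correct and follows essentially the same route as the paper: decompose $\rho$ via Proposition~\ref{H isos}, restrict to a component whose domain is an $\A$-copy and whose image is a $\B$-copy, and strip the labels to get $\theta\leq_T\rho$. The only difference is cosmetic --- the paper splits on $X=\emptyset$ versus $X\neq\emptyset$ (using $\psi_\emptyset$ in the first case and $\psi_{(i,0)}$ for $i\in X$ in the second), while you split on the parity of $|X|$, which works equally well since $\calN_X$ is a copy of $\B_0$ whenever $|X|$ is even.
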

\begin{proof}
    By Proposition~\ref{H isos}, $\rho=h_X\cup\bigcup_{z\in H}\psi_z$ for some $X\in\finsets$ and $\psi_z:\M_z\cong\calN_{h_X(z)}$ for each $z\in H$. If $X=\emptyset$, then $h_X=id_H$, so $\rho\geq_T\rho\restrict_{M_\emptyset}=\psi_\emptyset:\M_\emptyset\cong\calN_\emptyset$. Since $\M_\emptyset$ and $\calN_\emptyset$ are computably isomorphic to $\A_0$ and $\B_0$ respectively, $\rho$ computes an $\A_0\cong\B_0$ isomorphism.
    
    Now suppose $X\neq\emptyset$ and pick any $i\in X$. Then $h_X$ permutes $(i,0)$ and $(i,1)$, so
    \[\rho\geq_T\rho\restrict_{M_{(i,0)}}=\psi_{(i,0)}:\M_{(i,0)}\cong\calN_{h_X(i,0)}=\calN_{(i,1)}.\]
    Since $\M_{(i,0)}$ and $\calN_{(i,1)}$ are computably isomorphic to $\A_{i+1}$ and $\B_{i+1}$ respectively, $\rho$ computes an $\A_{i+1}\cong\B_{i+1}$ isomorphism.
\end{proof}
We can now describe the isomorphism spectrum of $\M$ and $\calN$ in terms of the isomorphism spectra of their components:
\begin{prop}\label{Isospec(M,N) is union}
    $\IsoSpec(\M,\calN)=\bigcup_{n<\omega}\IsoSpec(\A_i,\B_i)$.
\end{prop}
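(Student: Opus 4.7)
The plan is to deduce the proposition directly from the three preceding lemmas by proving the two inclusions separately, exploiting the fact that the isomorphism spectrum is upward-closed in the Turing degrees.

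For the inclusion $\bigcup_{n<\omega}\IsoSpec(\A_n,\B_n)\subseteq\IsoSpec(\M,\calN)$, I would fix $n<\omega$ and $\bfd\in\IsoSpec(\A_n,\B_n)$, so there is some $\theta:\A_n\cong\B_n$ with $\theta\leq_T\bfd$. If $n=0$, Lemma~\ref{Mempty iso comp M iso} provides an isomorphism $\rho:\M\cong\calN$ with $\rho\equiv_T\theta$. If $n\geq 1$, write $n=i+1$ and apply Lemma~\ref{Mi0 iso comp M iso} to produce $\rho:\M\cong\calN$ with $\rho\equiv_T\theta$. In either case $\rho\leq_T\bfd$, so $\bfd\in\IsoSpec(\M,\calN)$.

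For the reverse inclusion $\IsoSpec(\M,\calN)\subseteq\bigcup_{n<\omega}\IsoSpec(\A_n,\B_n)$, I would fix $\bfd\in\IsoSpec(\M,\calN)$ and pick an isomorphism $\rho:\M\cong\calN$ with $\rho\leq_T\bfd$. Lemma~\ref{M iso comp component iso} then supplies some $n<\omega$ and an isomorphism $\theta:\A_n\cong\B_n$ with $\theta\leq_T\rho\leq_T\bfd$, placing $\bfd$ inside $\IsoSpec(\A_n,\B_n)$ and hence inside the union.

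There is essentially no obstacle to overcome: the three lemmas have already done all the real work of translating between isomorphisms of the composite structures and isomorphisms of the components. The only thing to check is that the cases $n=0$ and $n\geq 1$ between them cover every index appearing in the union, which they do by construction of $\bfM$ and $\bfN$ in Definition~\ref{M N defn}.
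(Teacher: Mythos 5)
Your proposal is correct and matches the paper's proof, which likewise derives both inclusions directly from Lemmas~\ref{Mempty iso comp M iso}, \ref{Mi0 iso comp M iso}, and \ref{M iso comp component iso}; you merely spell out the upward-closure and index-shift ($n=i+1$) details that the paper leaves implicit.
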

\begin{proof}
    It follows from Lemma~\ref{Mempty iso comp M iso} and Lemma~\ref{Mi0 iso comp M iso} that $\bigcup_{i<\omega}\IsoSpec(\A_i,\B_i)\subseteq\IsoSpec(\M,\calN)$. It follows from Lemma~\ref{M iso comp component iso} that $\IsoSpec(\M,\calN)\subseteq\bigcup_{n<\omega}\IsoSpec(\A_i,\B_i)$.
\end{proof}
As $\{\A_i:i<\omega\}$ and $\{\B_i:i<\omega\}$ were taken to be arbitrary, Theorem~\ref{thm:comp unions} follows.

\subsection{An isomorphism spectrum that is not finitely generated}\label{sec:thomason}
Recall that we say that an isomorphism spectrum is \emph{finitely generated} if it is equal to the upward-closure of a finite set of Turing degrees. We apply Theorem~\ref{thm:comp unions} to results of Thomason~\cite{Tho71} to construct an isomorphism spectrum that is not finitely generated.

We will consider the computable copies $(\omega,<_X)$ of $(\omega,<)$ from Example~\ref{exmp:lt} that encode \ce sets $X\subseteq\omega$. Recall that for each \ce set $X$, there is a unique isomorphism $f:(\omega,<)\cong(\omega,<_X)$ and $f\equiv_T X$.

We use a result of Thomason~\cite{Tho71} to construct a uniformly computable family of copies of $(\omega,<)$ so that the collection of isomorphisms between these structures and the usual copy, $(\omega,<)$, have pairwise disjoint Turing degrees.
\begin{thm}[Thomason, 1971 \cite{Tho71}]
    There is a sublattice of the c.e. degrees isomorphic to the lattice of all finite sets of natural numbers.
\end{thm}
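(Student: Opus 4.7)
The plan is to construct a uniformly c.e.\ sequence $\{A_n\}_{n<\omega}$ such that the map $F\mapsto \deg_T(A_F)$, where $A_F:=\bigoplus_{n\in F}A_n$, is a lattice embedding of the finite subsets of $\omega$ (ordered by inclusion) into the c.e.\ degrees. Order and join come for free: $F\subseteq G$ gives $A_F\leq_T A_G$, and $A_{F\cup G}\equiv_T A_F\oplus A_G$. What must be engineered is meet preservation, $\deg_T(A_F)\wedge\deg_T(A_G)=\deg_T(A_{F\cap G})$, together with strictness $F\subsetneq G\Rightarrow A_F<_T A_G$. Strictness reduces to each $A_n$ being noncomputable, since $A_n\leq_T A_F$ with $n\notin F$ would, via meet preservation applied to $\{n\}$ and $F$, force $A_n$ to be computable.

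The meet condition becomes a family of requirements: for each pair of finite sets $F,G\subseteq\omega$ and each pair of indices $e,i$,
\[R_{e,i,F,G}:\ \Phi_e^{A_F}=\Phi_i^{A_G}\ \text{total}\ \Longrightarrow\ \Phi_e^{A_F}\leq_T A_{F\cap G}.\]
I would satisfy all $R_{e,i,F,G}$ together with Friedberg-style positive requirements $P_n: A_n\neq\Phi_n$ by a priority construction adapting the Lachlan/Yates minimal-pair module. At each stage $s$ and each active $R_{e,i,F,G}$, measure the length of agreement $\ell_s$ between $\Phi_e^{A_{F,s}}$ and $\Phi_i^{A_{G,s}}$; whenever $\ell_s$ hits a new maximum with use $u$, impose a restraint alternating between restricting enumerations into $A_n$ for $n\in F\setminus G$ and those for $n\in G\setminus F$, while leaving $A_n$ for $n\in F\cap G$ unrestrained since their changes can be absorbed by the $A_{F\cap G}$ oracle on the reduction side.

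The main obstacle is the shared use of each $A_n$ across infinitely many requirements: $A_n$ sits inside every $A_F$ with $n\in F$ and is thus constrained by infinitely many minimal-pair-type modules at once. This is resolved by the standard injury analysis, inducting on priority to show each requirement is injured only finitely often and its restraint stabilizes, together with the observation that the alternating-side feature of the minimal-pair strategy prevents any $A_n$ from being permanently frozen above any fixed level, so the positive requirements eventually succeed. To verify that a satisfied $R_{e,i,F,G}$ actually yields $C:=\Phi_e^{A_F}\leq_T A_{F\cap G}$, one uses $A_{F\cap G}$ to search, uniformly in $x$, for a stage past which the currently-restrained side has stabilized beyond the use needed to compute $\Phi_e^{A_{F,s}}(x)$; that value must equal $C(x)$. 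This gives the required embedding and proves Thomason's theorem.
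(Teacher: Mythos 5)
First, context: the paper does not prove this statement at all --- it is imported from Thomason's 1971 paper, and the only facts used later are the listed properties of his uniformly c.e.\ sequence. So you are reproving a cited result, and your architecture (a uniformly c.e.\ sequence $\{A_n\}$, positive requirements making each $A_n$ noncomputable, and minimal-pair-style requirements $R_{e,i,F,G}$ forcing $\Phi_e^{A_F}=\Phi_i^{A_G}$ total $\Rightarrow\Phi_e^{A_F}\leq_T A_{F\cap G}$) is indeed the standard, essentially original, route; your reduction of strictness and order-reflection to noncomputability of each $A_n$ plus meet preservation is correct.

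The genuine gap is in the verification of the meet requirements, which is exactly the hard part and which you dispose of in one sentence. In the plain minimal pair ($F\cap G=\emptyset$) the one-side-at-a-time discipline guarantees the common value at expansionary stages never changes, so the limit function is computable. Here the shared components $A_n$, $n\in F\cap G$, can injure both sides, and the danger is not the shared changes the oracle can see but the ones it cannot: after the original computations (whose uses your $A_{F\cap G}$-procedure has checked and found settled) are destroyed by permitted private enumerations, the side currently anchoring the value holds a \emph{new} computation with a larger use, and a later $A_{F\cap G}$-change above the old uses but below the new one --- together with a permitted private enumeration on the other side in the same expansionary interval --- destroys both computations at once and can flip the common value after your procedure has already committed to its answer. ``Search for a stage past which the currently-restrained side has stabilized beyond the use'' does not meet this, because the relevant uses grow over time and the oracle cannot certify settling below uses it has not yet interrogated. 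Excluding this drift needs a genuinely more careful discipline (e.g.\ tying permission for private enumerations below the restraint to shared changes the reduction can chargeably observe, or an iterated $A_{F\cap G}$-search whose correctness and termination are proved), together with an injury/liminf analysis showing the positive requirements still succeed under that stricter freezing --- and this tension is the actual content of Thomason's construction (and of lattice-embedding arguments generally; its difficulty in the non-distributive case is why the general lattice embedding problem is hard). Minor points: the positive requirements should be $A_n\neq\Phi_e$ for all pairs $(n,e)$, and ``the alternating-side feature prevents any $A_n$ from being permanently frozen'' is not the standard argument --- what one actually proves is that each restraint is injured finitely often and has finite liminf (or is controlled on a true path).
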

To prove this, Thomason constructs a uniformly \ce collection of sets $\{X_n:n<\omega\}$, each with a fixed computable enumeration. For each $n<\omega$ and finite $F\subseteq\omega$, they define
\[X_F:=\bigoplus_{m\in F}X_m\text{, }\bfd_n:=\deg_T(X_n)\text{, and }\bfd_F:=\deg_T(X_F)=\bigoplus_{m\in F}\bfd_m\]
and show that for all $F,G\in\finsets$,
\begin{enumerate}
    \item $\bfd_F\leq\bfd_G\Leftrightarrow F\subseteq G$,
    \item $\bfd_F\oplus\bfd_G=\bfd_{F\cup G}$,
    \item $\bfd_F\wedge\bfd_G$ exists and $\bfd_F\wedge\bfd_G=\bfd_{F\cap G}$.
\end{enumerate}
\begin{rem}
    The collection $\{(\omega,<_{X_n}):n<\omega\}$ is uniformly computable, and the set of degrees $\{\bfd_n:n<\omega\}$ is pairwise incomparable. Moreover $\bfd_n=\deg_T(X_n)$ is the Turing degree of the unique isomorphism from $(\omega,<)$ to $(\omega,<_{X_n})$.
\end{rem}
\begin{thm}\label{non-fg isospec}
    Define the uniformly computable collections $\bfA:=\{\A_n:n<\omega\}$ and $\bfB:=\{\B_n:n<\omega\}$ where for all $n<\omega$, $\A_n=(\omega,<)$, and $\B_n=(\omega,<_{X_n})$. Let $\M$ and $\calN$ be the two computable copies from Theorem~\ref{thm:comp unions}. Then $\IsoSpec(\M,\calN)$ is not finitely generated.
\end{thm}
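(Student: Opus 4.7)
The plan is to prove this by contradiction, combining the characterization of $\IsoSpec(\M,\calN)$ from Proposition~\ref{Isospec(M,N) is union} with the pairwise-incomparability structure of Thomason's degrees. First I would note that by Example~\ref{exmp:lt}, the unique isomorphism from $(\omega,<)$ to $(\omega,<_{X_n})$ has Turing degree $\bfd_n$, so $\IsoSpec(\A_n,\B_n)=\D(\geq\bfd_n)$. Combined with Proposition~\ref{Isospec(M,N) is union}, this yields
\[\IsoSpec(\M,\calN)=\bigcup_{n<\omega}\D(\geq\bfd_n).\]

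Next, I would assume for contradiction that this set is finitely generated, say $\IsoSpec(\M,\calN)=\bigcup_{j=1}^{k}\D(\geq\bfc_j)$ for some finite list $\bfc_1,\dots,\bfc_k$ of Turing degrees. Since each $\bfc_j$ lies in $\IsoSpec(\M,\calN)$, there exists for each $j\leq k$ an index $n_j<\omega$ with $\bfc_j\geq\bfd_{n_j}$. The finite set $F:=\{n_1,\dots,n_k\}\subseteq\omega$ cannot exhaust $\omega$, so I can choose some $n\notin F$.

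Then $\bfd_n\in\IsoSpec(\M,\calN)$, so $\bfd_n\geq\bfc_j$ for some $j\leq k$, whence $\bfd_n\geq\bfd_{n_j}$. By Thomason's characterization, $\bfd_{\{n_j\}}\leq\bfd_{\{n\}}$ is equivalent to $\{n_j\}\subseteq\{n\}$, i.e., $n_j=n$. This contradicts the choice $n\notin F$. I expect no significant obstacle: the argument is essentially the observation that a finite set of cones cannot cover an antichain of cones whose bases are pairwise incomparable and infinite in number. The only step worth double-checking is that the $\bfd_n$ really are pairwise incomparable (which is the $F=\{n\}$, $G=\{m\}$ instance of the first Thomason bullet), ensuring no single $\bfc_j$ can serve for two different indices simultaneously.
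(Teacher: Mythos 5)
Your proposal is correct and follows the paper's proof exactly: both identify $\IsoSpec(\M,\calN)=\bigcup_{n<\omega}\D(\geq\bfd_n)$ via Theorem~\ref{ThmB} (equivalently Proposition~\ref{Isospec(M,N) is union}) together with Example~\ref{exmp:lt}, and then conclude non-finite-generation from the pairwise incomparability of Thomason's degrees. The only difference is that you spell out the final incomparability argument (a finite set of cones cannot cover infinitely many pairwise incomparable cone bases), which the paper states without proof; your elaboration is accurate.
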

\begin{proof}
    By Theorem~\ref{thm:comp unions},
    \[\IsoSpec(\M,\calN)=\bigcup_{n<\omega}\IsoSpec\big((\omega,<),(\omega,<_{X_n})\big)=\bigcup_{n<\omega}\D_{\geq}(\bfd_n)\]
    which is not finitely generated since the degrees $\{\bfd_n:n<\omega\}$ are pairwise incomparable.
\end{proof}

\section{Automorphism spectra of computably composite structures}\label{sec:autospec}
Harizanov, Morozov, and Miller \cite{HMM10} have considered similar questions about \emph{automorphism spectra} which have some interesting connections to our results. We would like to thank an anonymous referee for bringing this work to our attention.
\begin{defn}[Harizanov, Morozov, Miller \cite{HMM10} Definition~1.1]
    Let $\M$ be a computable structure. The \emph{automorphism spectrum} of $\M$ is the set
    \[\text{AutSp}^*(\M)=\{\deg_T(f):f\in\Aut(\M)\minus\{1_\M\}\}\]
    where $1_\M$ is the identity automorphism on $\M$.
\end{defn}
They also consider the upward-closures of automorphism spectra under $\leq_T$ for which we will write
\[\text{AutSp}^*_\geq(\M)=\{\bfd:\exists f\in\Aut(\M)\minus\{1_\M\}[f\leq_T\bfd]\}.\]
Thus, AutSp$^*_\geq(\M)$ is equal to $\IsoSpec(\M,\M)$ with the omission of the degrees that only compute the trivial isomorphism $1_\M:\M\cong\M$. It is known that the upward-closure of an automorphism spectrum is itself an automorphism spectrum:
\begin{prop}[Harizanov, Morozov, Miller \cite{HMM10} Proposition~7.1]
    Let $\A$ be any computable nonrigid structure. Then there exists a computable structure $\calC$ such that
    \[\text{AutSp}^*(\calC)=\text{AutSp}^*_\geq(\A).\]
\end{prop}
No automorphism spectrum has the form $\{\bfd_1,\bfd_2\}$ where $\bfd_1$ and $\bfd_2$ are incomparable (\cite{HMM10} Proposition~6.1). Since there are incomparable degrees $\bfd_1$ and $\bfd_2$ such that $\{\bfd_1\}$ and $\{\bfd_2\}$ are automorphism spectra (see \cite{HMM10} Proposition~2.1), the class of automorphism spectra is not closed under even finite unions. However, the following observation uses a uniform infinitary version of the proof of Lemma~4.1 in \cite{HMM10} to show that the class of \emph{upward-closures} of automorphism spectra is closed under computable union, satisfying the analog of our Theorem~\ref{thm:comp unions}:
\begin{obs}
    If $\bfA=\{\A_i:i<\omega\}$ is a uniformly computable collection of copies and $\calO$ is the structure with domain $\omega$ and unary relations $U_n^\calO=\{n\}$ for each $n<\omega$, then $\text{AutSp}^*_\geq(\calO[\bfA])=\bigcup_{i<\omega}\text{AutSp}^*_\geq(\A_i)$.
\end{obs}

\section{Categoricity spectra of some computably composite structures}\label{sec:catspec}
We conclude by considering some basic properties of the categoricity spectra of computably composite structures on $\calH$. We will consider structures of the form $\calH[\{z\}\times\A:z\in H]$, which we will denote by $\calH[\A]$, and their computable copies up to computable isomorphism. That is, we are concerned with the case where all components are isomorphic to a particular computable structure $\A$. For example, the structures in Theorem~\ref{non-fg isospec} are computable copies of ${\calH[(\omega,<)]}$.

We may assume that computable copies of $\calH[\A]$ have the form $\calH[\bfB]$ where $\bfB=\{\B_z:z\in H\}$ is a uniformly computable collection of copies of $\A$. The isomorphisms from $\calH[\A]$ to $\calH[\bfB]$ are the functions $\rho=h_X\cup\bigcup_{z\in H}\psi_z$ where $X\in\finsets$ and $\psi_z:\{z\}\times\A\cong\B_{h_X(z)}$ for all $z\in H$. $\calH[\A]$ is $\bfd$-computably categorical if and only if $\bfd$ computes some such $\rho$ for every uniformly computable collection of copies $\{\B_z:z\in H\}$. That is, $\A$ must be $\bfd$-computably categorical in a uniform way.
\begin{defn}
    We will say that $\A$ is \emph{uniformly $\bfd$-computably categorical} if there is a partial $\bfd$-computable binary function $g$ such that whenever $\varphi_e$ is the atomic diagram of a copy $\wh{\A}$ of $\A$, $g(e,\cdot)$ is defined and is an isomorphism from $\A$ to $\wh{\A}$. We define the \emph{uniform categoricity spectrum} of $\A$ and \emph{degree of uniform categoricity} of $\A$ in the analogous ways.
\end{defn}
It is not hard to see that if $\A$ is uniformly $\bfd$-computably categorical, then $\calH[\A]$ is $\bfd$-computably categorical. In some examples, the opposite implication holds as well:
\begin{exmp}
    The simplest case is when $\A$ is uniformly computably categorical, so $\calH[\A]$ is computably categorical. Then the categoricity spectrum of $\calH[\A]$ and the uniform categoricity spectrum of $\A$ are both $\D$, the set of all Turing degrees.
\end{exmp}
\begin{exmp}
    The structure $(\omega,<)$ has strong degree of categoricity $\bfzero'$ and also uniform degree of categoricity $\bfzero'$. Every isomorphism between the copies $\calH[(\omega,<)]$ and $\calH[(\omega,<_{\emptyset'})]$ as defined in Example~\ref{exmp:lt} computes $\bfzero'$, so $\calH[(\omega,<)]$ has strong degree of categoricity $\bfzero'$, as do the structures in Theorem~\ref{non-fg isospec}.

    By the same argument, if $\A$ has strong degree of categoricity $\bfd$ and uniform degree of categoricity $\bfd$, then $\calH[\A]$ has strong degree of categoricity $\bfd$.
\end{exmp}
\begin{exmp}
    The structure $(\omega,S)$ where $S(n)=n+1$ is computably categorical, but has uniform degree of categoricity $\bfzero'$. We can construct a uniformly computable collection of copies $\{\A_n:n<\omega\}$ each with universe $\omega$, such that the $S$-least element of $\A_n$ is $0$ if and only if $n\notin\emptyset'$. Then defining $\alpha:H\to\omega$ so that $\alpha(X)=0$ for $X\in\finsets$ and $\alpha(i,a)=i+1$ for $i<\omega$ and $a<2$, the collection $\{\{z\}\times\B_{\alpha(z)}:z\in H\}$ is uniformly computable. So $\calH[\{z\}\times\B_{\alpha(z)}:z\in H]$ is a computable copy of $\calH[\A]$, but every isomorphism between these two copies computes $\emptyset'$. So $\calH[(\omega,S)]$ also has strong degree of categoricity $\bfzero'$.
\end{exmp}
However, it appears that the full power of uniform $\bfd$-computable categoricity of $\A$ may not be required for $\calH[\A]$ to be $\bfd$-computably categorical since the following weakening is sufficient:
\begin{defn}
    If $\bfC=\{\calC_i:i<\omega\}$ is a uniformly computable collection of copies of $\A$, we say that $\A$ is \emph{uniformly $\bfd$-computably categorical within $\bfC$} if there is a total $\bfd$-computable binary function $h$ such that for all $i<\omega$, $h(i,\cdot)$ is an isomorphism from $\A$ to $\calC_i$.
\end{defn}
\begin{prop}
    $\calH[\A]$ is $\bfd$-computably categorical if and only if $\A$ is uniformly $\bfd$-computably categorical within $\bfC$ for every uniformly computable collection $\bfC$ of copies of $\A$.
\end{prop}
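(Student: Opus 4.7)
The plan is to prove both directions directly, using Proposition~\ref{H isos} to analyze isomorphisms as the disjoint union of a hypercube automorphism $h_X$ and a family of component isomorphisms $\psi_z$, together with Remark~\ref{comp stable} to reduce to copies of the form $\calH[\bfB]$.

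For the ($\Leftarrow$) direction, I would start with an arbitrary computable copy of $\calH[\A]$ which, by Remark~\ref{comp stable}, may be taken to be of the form $\calH[\bfB]$ with $\bfB = \{\B_z : z \in H\}$ a uniformly computable collection of copies of $\A$. Fixing a computable bijection $e:\omega\to H$, the reindexed collection $\{\B_{e(i)} : i < \omega\}$ is uniformly computable, so the hypothesis supplies a total $\bfd$-computable binary function $h$ with $h(i,\cdot):\A\cong\B_{e(i)}$. Setting $\psi_z := h(e^{-1}(z),\cdot)$ (composed with the trivial computable relabeling that handles the $\{z\}\times\A$ tag) yields a uniformly $\bfd$-computable family with $\psi_z:\{z\}\times\A\cong\B_z$. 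Then $\rho := id_H \cup \bigcup_{z\in H}\psi_z$ is a $\bfd$-computable isomorphism $\calH[\A]\cong\calH[\bfB]$ by Proposition~\ref{H isos} with $X=\emptyset$.

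For the ($\Rightarrow$) direction, fix any uniformly computable collection $\bfC = \{\calC_i : i < \omega\}$ of copies of $\A$. I would encode $\bfC$ into a single computable copy of $\calH[\A]$: fix a computable bijection $e:\omega\to H$ and set $\B_{e(i)} := \{e(i)\}\times\calC_i$, so that $\bfB = \{\B_z : z \in H\}$ is uniformly computable and $\calH[\bfB]$ is a computable copy of $\calH[\A]$. The hypothesis of $\bfd$-computable categoricity yields a $\bfd$-computable isomorphism $\rho:\calH[\A]\cong\calH[\bfB]$, which by Proposition~\ref{H isos} decomposes as $\rho = h_X \cup \bigcup_{z\in H}\psi_z$. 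The key observation is that $X = \rho(\emptyset)$ is $\bfd$-computable, so $h_X$ is $\bfd$-computable. Since $h_X$ is an involution, for each $i<\omega$ the point $z_i := h_X(e(i))\in H$ satisfies $h_X(z_i)=e(i)$, and $\psi_{z_i} = \rho\restrict_{\{z_i\}\times A}$ is an isomorphism from $\{z_i\}\times\A$ onto $\B_{e(i)} = \{e(i)\}\times\calC_i$. Stripping the tag coordinates gives a total $\bfd$-computable function $h(i,a)$ that, for each $i$, is an isomorphism $\A\cong\calC_i$, as required.

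The only genuinely delicate point is in the forward direction: the isomorphism $\rho$ handed to us need not fix $H$ pointwise, so the naive attempt to extract $\psi_z:\{z\}\times\A\cong\B_z$ can fail. The fix is the observation above, namely that $X$ itself is $\bfd$-computable from $\rho$, so we can precompose with the $\bfd$-computable map $i\mapsto h_X(e(i))$ to pick out, for every target index $i$, the correct source $z_i$ whose component map lands in $\B_{e(i)}$. Because $h_X$ is a bijection of $H$, this assignment covers every $\calC_i$, and uniformity in $i$ is preserved throughout. The remaining details -- checking that the tag-stripping bijections $\{z\}\times\calC_i\to\calC_i$ are uniformly computable and that $id_H$ combines correctly with the $\psi_z$'s -- are routine given the uniform computability clauses in Definition~\ref{cc defn} and the definition of $\bfB$.
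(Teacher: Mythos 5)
Your proof is correct, and the backward direction is the same as the paper's (reduce to copies of the form $\calH[\bfB]$, reindex by a computable bijection $\omega\to H$, and glue $id_H$ with the uniformly $\bfd$-computable component maps). The forward direction, however, takes a genuinely different route in how it builds the test copy and neutralizes the unknown automorphism. You place the copies bijectively, $\B_{e(i)}=\{e(i)\}\times\calC_i$, and then compensate for the fact that $\rho$ decomposes as $h_X\cup\bigcup_z\psi_z$ by observing that $X=\rho(\emptyset)$ is available (indeed $X$ is a single finite set, so it could even be hard-coded as a non-uniform parameter) and precomposing with the computable involution $h_X$ to select the source component $z_i=h_X(e(i))$ whose image is exactly $\B_{e(i)}$; projecting off the tags then gives the required total $\bfd$-computable $h(i,\cdot):\A\cong\calC_i$. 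The paper instead chooses a redundant placement via $\alpha:H\to\omega$ with $\alpha\equiv 0$ on $\finsets$ and $\alpha(i,a)=i+1$, i.e.\ $\calC_0$ at every hypercube vertex and $\calC_{i+1}$ at both $(i,0)$ and $(i,1)$; then whatever the reflection $h_X$ is, the restrictions of $\rho$ to $\{\emptyset\}\times A$ and $\{(i,0)\}\times A$ automatically land in copies of $\calC_0$ and $\calC_{i+1}$, so $X$ never needs to be extracted and only the components at $\emptyset$ and the face points are used. Your version uses each $\calC_i$ exactly once and is arguably more direct, at the cost of the (harmless, finite) appeal to $X$; the paper's version trades that for a placement engineered so the automorphism ambiguity is invisible. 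Both arguments rest on the same decomposition (Proposition~\ref{composite isos} together with Corollary~\ref{H autos}, of which Proposition~\ref{H isos} is the special case you cite), and your remaining uniformity and tag-stripping checks are routine, as you say.
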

\begin{proof}
    Suppose $\calH[\A]$ is $\bfd$-computably categorical and let $\bfC=\{\calC_i:i<\omega\}$ be a uniformly computable collection of copies of $\A$. Define $\alpha:H\to\omega$ so that $\alpha(X)=0$ for all $X\in\finsets$, and $\alpha(i,a)=i+1$ for $i<\omega$ and $a<2$. Then $\{\{z\}\times\calC_{\alpha(z)}:z\in H\}$ is a pairwise disjoint uniformly computable collection of copies of $\A$. Since $\calH[\A]$ is $\bfd$-computably categorical, there is a $\bfd$-computable isomorphism $\rho:\calH[\A]\cong\calH[\{z\}\times\calC_{\alpha(z)}:z\in H]$ and there is an $X\in\finsets$ such that $\rho\restrict_{\{\emptyset\}\times A}:\{\emptyset\}\times\A\cong\{X\}\times\calC_0$ and $\rho\restrict_{\{(i,0)\}\times A}:\{(i,0)\}\times\A\cong\{(i,X(i))\}\times\calC_{i+1}$ for each $i<\omega$. Now define a function $h$ so that $h(0):=\pi_2(\rho\restrict_{\{\emptyset\}\times A}(X))$ and $h(i+1):=\pi_2(\rho\restrict_{\{(i,0)\}\times A}(i,X(i)))$ where $\pi_2$ is the projection onto the second coordinate. Then $h$ is total $\bfd$-computable and $h(i)$ is an isomorphism from $\A$ to $\calC_i$ for all $i<\omega$.

    Now suppose that $\A$ is uniformly $\bfd$-computably categorical within $\bfC$ for every uniformly computable collection $\bfC$ of copies of $\A$ and let $\calH[\B_z:z\in H]$ be an arbitrary computable copy of $\calH[\A]$. Fix a computable bijection $\eta:\omega\to H$. Then $\{\B_{\eta(i)}:i<\omega\}$ is uniformly computable, so there is a total $\bfd$-computable function $h$ such that $h(i)$ is an isomorphism from $\A$ to $\B_{\eta(i)}$. Define a function $\rho:\calH[\A]\to\calH[\B_z:z\in H]$ by so that for $z\in H$ and $a\in A$, $\rho(z):=z$ and $\rho(z,a):=h(\eta\inv(z),a)$. Then $\rho\restrict_{\{z\}\times A}$ is an isomorphism from $\{z\}\times\A$ to $\B_{\eta(\eta\inv(z))}=\B_z$. Thus, $\rho$ is a $\bfd$-computable isomorphism from $\calH[\A]$ to $\calH[\B_z:z\in H]$.
\end{proof}

In the previous examples, $\A$ was uniformly $\bfd$-computably categorical exactly when $\A$ was uniformly $\bfd$-computably categorical within every uniformly computable collection of copies. It is not clear whether this is true in general:
\begin{quest}
    Is there a computable structure $\A$ and a Turing degree $\bfd$ such that $\A$ is not uniformly $\bfd$-computably categorical, but is uniformly $\bfd$-computably categorical within every uniformly computable collection of copies of $\A$?
\end{quest}

\pagebreak

\nocite{*}
\bibliographystyle{alpha}
\bibliography{refs}

@book{AshKnight,
    title={Computable Structures and the Hyperarithmetical Hierarchy},
    author={C. Ash and J. Knight},
    year={2000},
    publisher={Elsevier},
    series={Studies in Logic and the Foundations of Mathematics},
    volume={144},
    address={Amsterdam etc.}
}

@article{AC12,
    title={Degrees That Are Not Degrees of Categoricity},
    author={B. Anderson and B. Csima},
    journal={Notre Dame J. Formal Logic},
    volume={57},
    number={3},
    pages={389--398},
    year={2012}
}

@article{BKY16,
    title={Degrees of Categoricity vs. Strong Degrees of Categoricity},
    author={N. Bazhenov and I. Kalimullin and M. Yamaleev},
    journal={Algebra and Logic},
    volume={55},
    number={2},
    pages={173--177},
    year={2016}
}

@article{BKY18,
    title={Degrees of Categoricity and Spectral Dimension},
    author={N. Bazhenov and I. Kalimullin and M. Yamaleev},
    journal={The Journal of Symbolic Logic},
    volume={83},
    number={1},
    pages={103--116},
    year={2018}
}

@article{BKY20,
    title={Strong Degrees of Categoricity and Weak Density},
    author={N. Bazhenov and I. Kalimullin and M. Yamaleev},
    journal={Lobachevskii Journal of Mathematics},
    volume={41},
    number={9},
    pages={1630--1639},
    year={2020}
}

@inproceedings{BY17,
    author={N. Bazhenov and M. Yamaleev},
    title={Degrees of Categoricity of Rigid Structures},
    booktitle={Conference on Computability in Europe},
    year={2017},
    pages={152--161},
    publisher={Springer}
}

@article{CFS13,
    title={Degrees of Categoricity and the Hyperarithmetic Hierarchy},
    author={B. Csima and J. Franklin and R. Shore},
    journal={Notre Dame Journal of Formal Logic},
    volume={54},
    number={2},
    pages={215--231},
    year={2013}
}

@article{CN22,
    title={Every \ensuremath{\Delta^0_2} Degree Is a Strong Degree of Categoricity},
    author={B. Csima and K. Ng},
    journal={Journal of Mathematical Logic},
    volume={22},
    number={3},
    year={2022}
}

@article{CS19,
    title = {Finite computable dimension and degrees of categoricity},
    journal = {Annals of Pure and Applied Logic},
    volume = {170},
    number = {1},
    pages = {58--94},
    year = {2019},
    author = {B. Csima and J. Stephenson}
}

@article{CDHM18,
    title={Degrees of Categoricity Above Limit Ordinals},
    author={B. Csima and M. Deveau and M. {Harrison-Trainor} and M. Mahmoud},
    journal={Computability},
    volume={9},
    number={2},
    pages = {127--137},
    year={2018}
}

@article{FKM10,
    title={Degrees of Categoricity of Computable Structures},
    author={E. Fokina and I. Kalimullin and R. Miller},
    journal={Archive for Mathematical Logic},
    volume={49},
    number={1},
    pages = {51--67},
    year={2010}
}

@article{FFK16,
    title={Categoricity Spectra for Rigid Structures},
    author={E. Fokina and A. Frolov and I. Kalimullin},
    journal={Notre Dame J. Formal Logic},
    volume={57},
    number={1},
    pages = {45--57},
    year={2016}
}

@article{HMM10,
    title={Simple Structures with Complex Symmetry},
    author={V. Harizanov and R. Miller and A. S. Morozov},
    journal={Algebra and Logic},
    volume={49},
    number={1},
    pages = {68--90},
    year={2010}
}

@article{Tho71,
    title={Sublattices of the recursively enumerable degrees},
    author={S. Thomason},
    journal={Zeitschrift für Mathematische Logik und Grundlagen der Mathematik},
    volume={17},
    number={1},
    pages = {273--280},
    year={1971}
}

@article{Tur20,
    title={Coding in the Automorphism Group of a Computably Categorical Structure},
    author={D. Turetsky},
    journal={Journal of Mathematical Logic},
    volume={20},
    number={03},
    year={2020}
}

@article{Mil09,
    title={d-computable Categoricity for Algebraic fields},
    author={R. Miller},
    journal={Journal of Symbolic Logic},
    volume={74},
    number={4},
    pages = {1325--1351},
    year={2009}
}

@book{Mon21,
    title={Computable Structure Theory: Within the Arithmetic},
    author={A. Montalb\'an},
    year={2021},
    publisher={Cambridge University Press}
}

@book{Soare,
    title={Turing Computability: Theory and Applications},
    author={R. Soare},
    year={2016},
    publisher={Springer-Verlag}
}

\end{document}